\title{Renormalized Energy and Asymptotic Expansion of Optimal Logarithmic Energy on the Sphere}
\author{Laurent B\'etermin\\Etienne Sandier}
\newtheorem{thm}{Theorem}[section]
\newtheorem{defi}[thm]{Definition}
\newtheorem{conjecture}[thm]{Conjecture}
\newtheorem{prop}[thm]{Proposition}
\newtheorem{lemma}[thm]{Lemma}
\newtheorem{remark}[thm]{Remark}
\newcommand{\divergence}{\mathop{\rm div}\nolimits}
\newcommand{\curl}{\mathop{\rm curl}\nolimits}
\newcommand{\supp}{\mathop{\rm Supp}\nolimits}
\newcommand{\R}{\mathbb R}
\newcommand{\Q}{\mathbb Q}
\newcommand{\Z}{\mathbb Z}
\newcommand{\C}{\mathbb C}
\newcommand{\ms}{{\mathcal S}}
\newcommand{\V}{{\mathcal V}}
\def\S{{\mathbb S}}
\def\bm{{\overline m}}
\def\mb{{\underline m}}
\def\indic{\mathbf{1}}
\def\ds{\displaystyle}
\def\hal{{\frac12}}
\def\ep{{\varepsilon}}
\def\vp{\varphi}
\def\div{\divergence}
\numberwithin{equation}{section}
\def\Xint#1{\mathchoice
  {\XXint\displaystyle\textstyle{#1}}%
  {\XXint\textstyle\scriptstyle{#1}}%
  {\XXint\scriptstyle\scriptscriptstyle{#1}}%
  {\XXint\scriptscriptstyle\scriptscriptstyle{#1}}%
  \!\int}
\def\XXint#1#2#3{{\setbox0=\hbox{$#1{#2#3}{\int}$}
    \vcenter{\hbox{$#2#3$}}\kern-.5\wd0}}
\def\dashint{\Xint-}
\begin{document}
\maketitle
\begin{abstract}
We study the Hamiltonian of a two-dimensional log-gas  with a confining potential $V$ satisfying the weak growth assumption -- $V$ is of the same order than $2\log\|x\|$ near infinity -- considered by Hardy and Kuijlaars [J. Approx. Theory, 170(0):44-58, 2013]. We prove an asymptotic expansion, as the number $n$ of points goes to infinity, for the minimum of this Hamiltonian using the Gamma-Convergence method of Sandier and Serfaty \cite{2DSandier}. We show that the asymptotic expansion as $n\to +\infty$ of the minimal logarithmic energy of $n$ points on the unit sphere in $\R^3$ has a term of order $n$ thus proving a long standing conjecture of Rakhmanov, Saff and Zhou [Math. Res. Letters, 1:647-662, 1994]. Finally we prove the equivalence between the conjecture of Brauchart, Hardin and Saff [Contemp. Math., 578:31-61,2012] about the value of this term and the conjecture of Sandier and 
Serfaty [Comm. Math. Phys., 313(3):635-743, 2012] about the minimality of the triangular lattice for a ``renormalized energy" $W$ among configurations of fixed asymptotic density. 
\end{abstract}

\noindent
\textbf{AMS Classification:} Primary 52A40, 82B05 ; Secondary 41A60, 82B21, 31C20. \\
\textbf{Keywords:} Coulomb gas ; Abrikosov lattices ; Triangular lattice ; Renormalized energy ; Crystallization ; Logarithmic energy ; Number theory ; Logarithmic potential theory ; Weak confinement ; Gamma-convergence ; Ginzburg-Landau ; Vortices.\\

\section{Introduction}
Let $(x_1,...,x_n)\in(\R^2)^n$ be a configuration of $n$ points interacting through a  logarithmic potential and confined by an external field $V$. The Hamiltonian of this system, also known as a  Coulomb gas,  is defined as
$$
w_n(x_1,...,x_n):=-\sum_{i\neq j}^n\log|x_i-x_j|+n\sum_{i=1}^nV(x_i)
$$
where $|\cdot|$ is the Euclidean norm in $\R^2$. The minimization of  $w_n$ is linked to the following classical problem of logarithmic potential theory: find a probability measure $\mu_V$ on $\R^2$ which minimizes
\begin{equation}\label{IV}
I_V(\mu):=\iint_{\R^2\times \R^2}\left(\frac{V(x)}{2}+\frac{V(y)}{2}-\log|x-y|\right)d\mu(x)d\mu(y)
\end{equation}
amongst all probability measures $\mu$ on $\R^2$. This type of  problem dates back to Gauss. More recent references are the thesis of  Frostman \cite{Frost} and the monography of  E.Saff and V.Totik \cite{SaffTotik}. The usual assumptions on $V:\R^2\to\R\cup\{+\infty\}$ are that it is lower semicontinuous, that it is finite on a set of nonzero capacity, and that it satisfies the growth assumption
\begin{equation}\label{classicgrowth}
\lim_{|x|\to +\infty}\left\{V(x)-2\log| x|\right\}=+\infty.
\end{equation}
These assumptions ensure that a unique minimizer $\mu_V$ of $I_V$ exists and that it has compact support. 

Recently, Hardy and Kuijlaars \cite{Hardy2} (see also \cite{Hardy}) proved that if one replaces \eqref{classicgrowth} by  the so-called  weak growth assumption 
\begin{equation}\label{growthL}
\liminf_{|x|\to +\infty}\left\{V(x)-\log(1+|x|^2)  \right\}>-\infty,
\end{equation}
then $I_V$ still admits a unique minimizer, which may no longer have compact support. Moreover Bloom, Levenberg and Wielonsky \cite{Bloom:2014qy} proved that the classical Frostman type inequalities still hold in this case. These results make use of the stereographic projection, a method already used by Rakhmanov, Saff and Zhou in \cite{Electsphere} to prove separation properties of optimal configurations on spheres.\\

Coming back to the minimum of the discrete energy $w_n$, its relation to the minimum of $I_V$ is that as $n\to +\infty$, the minimum of $w_n$ is equivalent to $n^2 \min I_V$. The next term in the asymptotic expansion of $w_n$ was derived by  Sandier and Serfaty \cite{2DSandier}  in the classical case \eqref{classicgrowth},  it reads 
$$\min w_n =  n^2 \min I_V - \frac{n}{2} \log n + \alpha_V n +o(n),$$
where $\alpha_V$ is related to the minimum of a Coulombian renormalized energy studied in \cite{Sandier_Serfaty} which quantifies the discrete energy of infinitely many positive charges  in the plane  screened by a uniform negative background. Note that rather strict assumptions in addition to \eqref{classicgrowth} need to be made on $V$ for this expansion to hold, but they are satisfied in particular if $V$ is smooth and  strictly convex.

Here, we show that such an asymptotic formula still  holds when the classical growth assumption \eqref{classicgrowth} is replaced with  the weak growth assumption \eqref{growthL}. However it is no longer obvious that the minimum of $w_n$ is achieved in this case, as the weak growth assumption could allow one point to go to infinity.
\begin{thm} ~\label{thintro}
 Let $V$ be an admissible potential \footnote{See Section \ref{ad} for the precise definition}. Then the following asymptotic expansion holds.
\begin{equation}\label{expansion}
\inf_{(\R^2)^n} w_n=I_V(\mu_V)n^2-\frac{n}{2}\log n +\left(\frac{1}{\pi}\min_{\mathcal{A}_1}W-\frac{1}{2}\int_{\R^2}m_V(x)\log m_V(x)dx\right)n + o(n),
\end{equation}
where $\mu_V = m_V(x)\,dx$ is the unique minimizer of $I_V$  (see Section~\ref{RE} for precise definitions of $W$ and $\mathcal{A}_1$.)
\end{thm}

This result is proved  using the methods in \cite{Sandier_Serfaty,2DSandier} suitably adapted to equilibrium measures with possibly non-compact support together using  the  stereographic projection  as in \cite{Electsphere}, or more recently in \cite{Dragnev:2002, Hardy2,Hardy,Bloom:2014qy}, which allows also  to connect the discrete energy problem for log gases in the plane with the discrete logarithmic energy problem for finitely many points on the unit sphere $\S^2$ in the Euclidean space $\R^3$.\\

The logarithmic energy of a configuration $(y_1,...,y_n)\in (\S^2)^n$ is given by
$$
\textnormal{E}_{\log}(y_1,...,y_n):=-\sum_{i\neq j}^n\log\|y_i-y_j\|,
$$
where $\|\cdot\|$ is the Euclidean norm in $\R^3$. Finding a minimizer of such an energy functional is a problem with many links and ramifications  as discussed in the fundamental paper of Saff and Kuijlaars \cite{PointsSphere} (see also \cite{BrauchGrab}). For instance  Smale's $7^{th}$ problem \cite{231301} is  to find, for any $n\geq 2$, a universal constant $c\in\R$ and a nearly optimal configuration $(y_1,...,y_n)\in (\S^2)^n$ such that, letting  $\mathcal{E}_{\log}(n)$ denote the minimum of $\textnormal{E}_{\log}$ on $(\S^2)^n$, 
$$
\textnormal{E}_{\log}(y_1,...,y_n)-\mathcal{E}_{\log}(n)\leq c\log n.
$$
Identifying the term of order $n$  in the expansion of $\mathcal{E}_{\log}(n)$ can be seen as a modest step towards  a better understanding  of this problem.\\

It was known  (lower bound by Wagner \cite{57451281} and upper bound by Kuijlaars and Saff \cite{403978}), that 
$$
\left(\frac{1}{2}-\log 2\right)n^2-\frac{1}{2}n\log n+c_1 n\leq \mathcal{E}_{\log}(n)\leq \left(\frac{1}{2}-\log 2\right)n^2-\frac{1}{2}n\log n+c_2 n
$$
for some fixed constant $c_1$ and $c_2$. Thus one can naturally ask for the existence of the limit
 $$
 \lim_{n\to +\infty}\frac{1}{n}\left[\mathcal{E}_{\log}(n)-\left(\frac{1}{2}-\log2\right)n^2+\frac{n}{2}\log n  \right].
 $$
\medskip

\begin{conjecture} \label{conj1}(Rakhmanov, Saff and Zhou, \cite{2400418}) There exists a constant $C$ not depending on $n$ such that
$$
\mathcal{E}_{\log}(n)=\left(\frac{1}{2}-\log2  \right)n^2-\frac{n}{2}\log n+ Cn+o(n) \quad \text{ as } n\to +\infty.
$$
\end{conjecture}
\medskip 

\begin{conjecture} \label{conj2} (Brauchart, Hardin and Saff, \cite{Brauchart}) The constant $C$ in Conjecture \ref{conj1} is equal to $C_{BHS}$, where 
\begin{equation}\label{CBHSdef}
C_{BHS}:=2\log2 +\frac{1}{2}\log\frac{2}{3}+3\log\frac{\sqrt{\pi}}{\Gamma(1/3)}.
\end{equation}
\end{conjecture}
\medskip

As we will see, our results imply that the last conjecture is  equivalent to one  concerning  the global optimizer of the renormalized energy $W$. \medskip

\begin{conjecture} \label{conj3} (Sandier and Serfaty, \cite{Sandier_Serfaty}, or see the review by Serfaty \cite{Serfaty:2013fk}) The triangular lattice is a  global minimizer of $W$  among discrete subsets of $\R^2$ with asymptotic density one.
\end{conjecture}
\medskip

The expansion \eqref{expansion} in the particular case $V(x)=\log(1+|x|^2)$ transported to  $\S^2$ using an inverse stereographic projection and appropriate rescaling gives an expansion for $\mathcal{E}_{\log}(n)$ and thus proves Conjecture \ref{conj1}. The constant $C$ in Conjecture \ref{conj1} can moreover be  expressed in terms of the minimum of the renormalized energy $W$. The  value of $W$ for the triangular lattice obviously provides an upper bound for this minimum, and  by using the Chowla-Selberg formula  to compute the expression given in \cite{Sandier_Serfaty} for this quantity, we show that this upper bound is precisely $C_{BHS}$. This bound is of course sharp if and only if Conjecture \ref{conj3} is true. Thus we deduce from \eqref{expansion} the following.

\begin{thm}  There exists $C\neq 0$ independent of $n$ such that, as $n\to +\infty$, 
$$
\mathcal{E}_{\log}(n)=\left(\frac{1}{2}-\log2  \right)n^2-\frac{n}{2}\log n+Cn+o(n),\quad  C= \frac{1}{\pi}\min_{\mathcal{A}_1}W +\frac{\log\pi}{2}+\log2.$$
Moreover  $\displaystyle C\leq C_{BHS}$ where $C_{BHS}$ is given in \eqref{CBHSdef}, and equality holds iff $\displaystyle \min_{\mathcal{A}_1}W$ is achieved for the triangular lattice of density one.
\end{thm}

The plan of the paper is  as follows. In Section \ref{RE} we recall the definition of $W$ and some of its properties from \cite{Sandier_Serfaty}. In Section \ref{EPWP} we recall results about existence, uniqueness and variational Frostman inequalities for $\mu_V$. Moreover, we give the precise definition of an admissible potential $V$. In Sections  \ref{SpF} and  \ref{UsLem}  we adapt the method of \cite{Sandier_Serfaty} to the case of equilibrium measures with noncompact support. The expansion \eqref{expansion} is proved in Section \ref{AEOTH}. Finally in Section \ref{CTLEOTS} we prove Conjecture \ref{conj1} about the existence of $C$, the upper bound  $C\leq C_{BHS}$ and the equivalence between Conjectures \ref{conj2} and \ref{conj3}.

\section{Renormalized Energy}\label{RE}
Here we recall the definition of the renormalized energy $W$ (see \cite{2DSandier} for more details). For any $R>0$, $B_R$ denotes the ball centered at the origin with radius $R$.

\begin{defi} Let $m$ be a nonnegative number and $E$ be a vector-field in $\R^2$. We say $E$ belongs to the \textbf{admissible class $\mathcal{A}_m$} if 
\begin{equation}\label{div} \divergence E=2\pi(\nu-m) \text{ and } \curl E =0
\end{equation}
where $\nu$ has the form
\begin{equation}\label{nu}
\nu=\sum_{p\in \Lambda}\delta_p \text{ , for some discrete set } \Lambda\subset\R^2,
\end{equation}
and if 
\begin{equation*}
\frac{\nu(B_R)}{|B_R|} \text{ is bounded by a constant independent of } R>1.
\end{equation*}
\end{defi}
\begin{remark} The real $m$ is the average density of the points of $\Lambda$ when $E\in\mathcal{A}_m$.
\end{remark}

\begin{defi}
Let $m$ be a nonnegative number. For any continuous function $\chi$ and any vector-field $E$ in $\R^2$ satisfying \eqref{div} where $\nu$ has the form \eqref{nu} we let
\begin{equation*}
W(E,\chi)=\lim_{\eta\to 0}\left(\frac{1}{2}\int_{\R^2\backslash\cup_{p\in\Lambda}B(p,\eta)}\chi(x) |E(x)|^2dx+\pi\log\eta\sum_{p\in \Lambda}\chi(p)   \right).
\end{equation*}
\end{defi}

\begin{flushleft} We use the notation $\chi_{B_R}$ for positive cutoff functions satisfying, for some constant $C$ independent of $R$
\end{flushleft}
\begin{equation} \label{chi}
|\nabla\chi_{B_R}|\leq C,\quad \supp(\chi_{B_R})\subset B_R,\quad \chi_{B_R}(x)=1 \text{ if } d(x,B_R^c)\geq 1.
\end{equation}
where $d(x,A)$ is the Euclidean distance between $x$ and set $A$.
\begin{defi} The \textbf{renormalized energy $W$} is defined, for $E\in \mathcal{A}_m$ and $\{\chi_{B_R}\}_R$ satisfying \eqref{chi}, by
\begin{equation*}
W(E)=\limsup_{R\to +\infty}\frac{W(E,\chi_{B_R})}{|B_R|}.
\end{equation*}
\end{defi}

\begin{remark} It is shown in \cite[Theorem 1]{Sandier_Serfaty} that the value of $W$ does not depend on the choice of  cutoff functions satisfying \eqref{chi}, and that 
  $W$ is bounded below and {\em admits a minimizer over $\mathcal{A}_1$}.
  
  Moreover (see \cite[Eq. (1.9),(1.12)]{Sandier_Serfaty}),  if $E\in\mathcal{A}_m$, $m>0$, then 
$$
E'=\frac{1}{\sqrt{m}}E(./\sqrt{m})\in\mathcal{A}_1\quad \text{and} \quad W(E)=m\left(W(E')-\frac{\pi}{2}\log m  \right).
$$ In particular 
\begin{equation}\label{scaling}
\min_{\mathcal{A}_m} W=m\left( \min_{\mathcal{A}_1} W- \frac{\pi}{2}\log m  \right),
\end{equation}
and  $E$ is a minimizer of $W$ over $\mathcal{A}_m$ if and only if $E'$ minimizes $W$ over $\mathcal{A}_1$.
\end{remark}

In the periodic case, the following result \cite[Theorem 2]{Sandier_Serfaty}, which supports Conjecture~\ref{conj3} above: Given a  Bravais lattice\footnote{A Bravais lattice of $\R^2$, also called ``simple lattice" is $L=\Z \vec{u}\oplus \Z \vec{v}$ where $(\vec{u},\vec{v})$ is a basis of $\R^2$.} $\Lambda$ of density $m$, there is a unique (modulo constants)  $\Lambda$-periodic solution $H_\Lambda$ to the equation  $-\Delta H = 2\pi(\sum_{p\in\Lambda} \delta_p - m)$  and we may define
$$ W(\Lambda) = W(\nabla H_\Lambda).$$
Then we have 
\begin{thm}\label{Trioptimal}
The unique minimizer, up to rotation, of $W$ over Bravais lattices of fixed density $m$ is the triangular lattice 
$$
\Lambda_m=\sqrt{\frac{2}{m\sqrt{3}}}\left(\Z (1,0)\oplus \Z \left(\frac{1}{2},\frac{\sqrt{3}}{2}\right)\right).
$$
\end{thm}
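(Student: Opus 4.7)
The plan is to use the scaling identity \eqref{scaling} to reduce to density one, compute $W$ explicitly on Bravais lattices by Fourier analysis on the torus, and then invoke a classical lattice minimization result to single out the triangular lattice. The lattice $\Lambda_m$ in the statement has covolume $1/m$ and hence density $m$, while $\sqrt m\,\Lambda_m = \Lambda_1$. Given any Bravais lattice $\Lambda$ of density $m$, there is a canonical $\Lambda$-periodic candidate $E_\Lambda \in \mathcal{A}_m$, namely $E_\Lambda = \nabla H_\Lambda$ with $H_\Lambda$ the mean-zero $\Lambda$-periodic solution of $-\Delta H_\Lambda = 2\pi(\sum_{p\in\Lambda}\delta_p - m)$; one checks that among the possibly many $E \in \mathcal{A}_m$ compatible with the configuration $\nu = \sum_{p\in\Lambda}\delta_p$, the periodic one minimizes $W$, so that the problem reduces to $\Lambda \mapsto W(E_\Lambda)$. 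Since $E_\Lambda(\cdot/\sqrt m)/\sqrt m$ is the periodic field of $\sqrt m\,\Lambda$, which has density one, \eqref{scaling} reduces the theorem to the case $m = 1$.

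For a density-one Bravais lattice one writes $\Lambda_\tau = (\mathrm{Im}\,\tau)^{-1/2}(\Z \oplus \Z\tau)$ with $\tau$ in the upper half-plane $\mathbb{H}$. A Fourier expansion of $H_{\Lambda_\tau}$, together with a careful subtraction of the logarithmic singularity at each lattice point (using the cutoffs \eqref{chi} and the periodicity to reduce the integral in \eqref{W} to a single fundamental cell), yields the closed-form expression
\begin{equation*}
W(E_{\Lambda_\tau}) \;=\; -\pi\,\log\!\Bigl(\,2\pi\,|\eta(\tau)|^2\,\sqrt{\mathrm{Im}\,\tau}\,\Bigr),
\end{equation*}
where $\eta$ denotes the Dedekind eta function. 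The quantity inside the logarithm is $\mathrm{SL}(2,\Z)$-invariant, so minimizing $W(E_{\Lambda_\tau})$ on $\mathbb{H}$ amounts to maximizing $|\eta(\tau)|^2\sqrt{\mathrm{Im}\,\tau}$ on the standard fundamental domain.

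The main obstacle is this last maximization, which is the arithmetic heart of the proof. The strategy is to relate $|\eta(\tau)|^2\sqrt{\mathrm{Im}\,\tau}$ to the Epstein zeta function $\zeta_{\Lambda_\tau}(s) = \sum_{p \in \Lambda_\tau \setminus \{0\}} |p|^{-2s}$ via the Kronecker limit formula at $s = 1$, and then to invoke the classical theorems of Rankin, Cassels, Ennola, Diananda and Montgomery asserting that, among density-one Bravais lattices, $\zeta_\Lambda(s)$ is minimized at the hexagonal lattice for every real $s > 1$. Passing to the limit $s \to 1^+$ in the Kronecker formula identifies $\tau = e^{i\pi/3}$, i.e.\ the hexagonal lattice, as the unique (up to rotation) minimizer of $W$ among density-one Bravais lattices. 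Undoing the rescaling, the minimizer at density $m$ is $\Lambda_1/\sqrt m = \Lambda_m$, which is the formula in the statement.
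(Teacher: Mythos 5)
Your overall route is sound and, up to the closed formula for $W$ on lattices, parallels the paper: the expression $W(E_{\Lambda_\tau})=-\pi\log\bigl(2\pi|\eta(\tau)|^2\sqrt{\mathrm{Im}\,\tau}\bigr)$ you propose to derive by Fourier expansion is exactly the Sandier--Serfaty lattice formula that the paper simply quotes (your constant agrees with the value the paper obtains after rescaling from density $1/2\pi$ to density $1$), and the reduction via \eqref{scaling} and $\mathrm{SL}(2,\Z)$-invariance to maximizing $\sqrt{\mathrm{Im}\,\tau}\,|\eta(\tau)|^2$ on the fundamental domain is the same. Where you genuinely diverge is the arithmetic step: the paper observes that $W$ is an increasing affine function of the height $h$ of the flat torus (by comparing with the Osgood--Phillips--Sarnak formula $h=-\log(b|\eta(\tau)|^4)+C$) and then cites OPS, Corollary 1(b), which gives \emph{strict} minimality of $h$ at the hexagonal lattice; you instead go through the Epstein zeta function and the Kronecker limit formula. (Your side remark that the periodic field minimizes $W$ among all fields compatible with $\nu=\sum_{p\in\Lambda}\delta_p$ is left unproved, but it is also not needed: $W(\Lambda)$ here means the renormalized energy of the canonical periodic field.)

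The genuine gap is in the uniqueness claim. The Rankin--Cassels--Ennola--Diananda theorems give, for each real $s>1$, the strict inequality $\zeta_{\Lambda}(s)>\zeta_{\mathrm{hex}}(s)$ for unimodular $\Lambda$ not hexagonal; letting $s\to 1^+$ and using that the polar parts $\pi/(s-1)$ cancel, you only obtain the \emph{non-strict} inequality between the Kronecker limit (constant) terms, i.e.\ $\sqrt{\mathrm{Im}\,\tau}\,|\eta(\tau)|^2\le \sqrt{\mathrm{Im}\,\tau_{\mathrm{hex}}}\,|\eta(\tau_{\mathrm{hex}})|^2$. This proves that the triangular lattice is \emph{a} minimizer of $W$ among Bravais lattices of fixed density, but it does not identify it as the \emph{unique} minimizer up to rotation, which is what the theorem asserts: a family of strict inequalities can degenerate to equality in the limit. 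To close this you need strictness at $s=1$ itself, e.g.\ by writing the constant term as an integral of $\theta_\Lambda(t)-\theta_{\mathrm{hex}}(t)$ against a positive kernel and invoking Montgomery's strict theta-function inequality (the route of Sandier--Serfaty), or by citing directly the strict extremality of $\sqrt{\mathrm{Im}\,\tau}\,|\eta(\tau)|^2$ (equivalently of the height) proved by Osgood--Phillips--Sarnak, which is precisely the ingredient the paper's proof uses.
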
 
This is proved in \cite{Sandier_Serfaty} using the result of Montgomery on minimal theta function \cite{Mont}, we provide an alternative proof below. 
\begin{proof}
Osgood, Phillips and Sarnak \cite[Section 4, page 205]{OPS} proved, for $\Lambda=\Z\oplus \tau\Z$, $\tau=a+ib$, that the height of the flat torus $\C/\Lambda$ (see \cite{OPS,Chiu,CoulLazzarini} for more details) is 
$$
h(\Lambda)=-\log(b|\eta(\tau)|^4)+C, \quad C\in \R,
$$ where $\eta$ is the Dedekind eta function\footnote{See Section \ref{NTh}}. But  from \cite{Sandier_Serfaty} we have 
$$
W(\Lambda)=-\frac{1}{2}\log\left(\sqrt{2\pi b}|\eta(\tau)|^2\right)+C,
$$ 
therefore $W(\Lambda)=\alpha h(\Lambda)+\beta$ 
where $\alpha>0$, $\beta\in\R$ are independent of $\Lambda$.

Then from  \cite[Corollary 1(b)]{OPS}, the triangular lattice minimizes $h$ among Bravais lattices with fixed density, hence the same is true  for $W$.
\end{proof}

\section{Equilibrium Problem in the Whole Plane}\label{EPWP}
In this section we recall results on existence, uniqueness and characterization of the equilibrium measure $\mu_V$ and we give the definition of the admissible potentials.

\subsection{Equilibrium measure, Frostman inequalities and differentiation of $U^{\mu_V}$}

\begin{defi} \label{defUmu}\textnormal{(\cite{Bloom:2014qy})} Let $K\subset \R^2$ be a compact set and let $\mathcal{M}_1(K)$ be the family of probability measures supported on $K$. Then the \textbf{logarithmic potential} and the \textbf{logarithmic energy} of $\mu\in \mathcal{M}_1(K)$ are defined as 
$$
U^{\mu}(x):=-\int_{K}\log|x-y|d\mu(y) \quad \text{and} \quad I_0(\mu):=-\iint_{K\times K} \log|x-y|d\mu(x)d\mu(y).
 $$
We say that $K$ is \textbf{log-polar} if $I_0(\mu)=+\infty$ for any $\mu\in \mathcal{M}_1(K)$ and we say that a Borel set $E$ is log-polar if every compact subset of $E$ is log-polar.
Moreover, we say that an assertion holds \textbf{quasi-everywhere} (q.e.) on $A\subset \R^2$ if it holds on $A\backslash P$ where $P$ is log-polar. 
\end{defi}
\begin{remark} We recall that the Lebesgue measure of  a log-polar set is zero.
\end{remark}
\noindent Now we recall results about the existence, the uniqueness and the characterization of the equilibrium measure $\mu_V$ proved in \cite{Frost,SaffTotik} for the classical growth assumption \eqref{classicgrowth}, and by Hardy and Kuijlaars \cite{Hardy,Hardy2} (for existence and uniqueness) and Bloom, Levenberg and Wielonsky \cite{Bloom:2014qy} (for Frostman type variational inequalities) for weak growth assumption \eqref{growthL}.

\begin{thm}\label{equi}\textnormal{(\cite{Frost,SaffTotik,Hardy,Hardy2,Bloom:2014qy})} Let $V$ be a lower semicontinuous function on $\R^2$ such that $\{x\in \R^2 ; V(x)<+\infty \}$ is a non log-polar subset of $\R^2$ satisfying 
$$
\liminf_{|x|\to +\infty}\{V(x)-\log(1+|x|^2) \}>-\infty.
$$
Then we have:
\begin{enumerate}
\item $\displaystyle \inf_{\mu \in \mathcal{M}_1(\R^2)}I_V(\mu)$ is finite, where $I_V$ is given by \eqref{IV}.
\item There exists a unique equilibrium measure $\mu_V\in \mathcal{M}_1(\R^2)$ with 
$$
 I_V(\mu_V)=\inf_{\mu \in \mathcal{M}_1(\R^2)}I_V(\mu)
$$
and the logarithmic energy $I_0(\mu_V)$ is finite.
\item The support $\Sigma_V$ of $\mu_V$ is contained in $\{x\in \R^2 ; V(x)<+\infty\}$ and $\Sigma_V$ is not log-polar.
\item Let 
\begin{equation}\label{cvdef}
 c_V:=I_V(\mu_V)-\int_{\R^2}\frac{V(x)}{2}d\mu_V(x)
 \end{equation}
denote the Robin constant. Then we have the following Frostman variational inequalities (for the fact that $U^{\mu_V}$ is well defined, see for instance \cite{Bloom:2014qy}):
\begin{align} \label{Frost1}& U^{\mu_V}(x)+\frac{V(x)}{2}\geq c_V \quad \text{q.e. on } \R^2,\\
\label{Frost2} & U^{\mu_V}(x)+\frac{V(x)}{2}\leq c_V \quad \text{for all } x\in\Sigma_V.
\end{align} 
\end{enumerate}
\end{thm}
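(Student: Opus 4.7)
The plan is to reduce the equilibrium problem on the noncompact space $\R^2$ to a classical equilibrium problem on the compact sphere $\S^2$ via stereographic projection, following the compactification approach of Hardy and Kuijlaars \cite{Hardy,Hardy2}. Let $p:\S^2\to\R^2\cup\{\infty\}$ denote the stereographic projection from the north pole $N$. The classical chordal identity
$$\|y-y'\|=\frac{2|p(y)-p(y')|}{\sqrt{1+|p(y)|^2}\sqrt{1+|p(y')|^2}}$$
allows one to pull back $I_V$ via $p$: setting $\tilde V(y):=V(p(y))-\log(1+|p(y)|^2)$, one finds that for every $\mu\in\mathcal M_1(\R^2)$, $I_V(\mu)=\tilde I(p^{-1}_\#\mu)+\mathrm{const}$, where
$$\tilde I(\tilde\mu):=\iint_{\S^2\times\S^2}\Bigl(\tfrac{\tilde V(y)}{2}+\tfrac{\tilde V(y')}{2}-\log\|y-y'\|\Bigr)d\tilde\mu(y)d\tilde\mu(y').$$
The weak growth assumption \eqref{growthL} is exactly the statement that $\tilde V$, extended at $N$ by its $\liminf$, is bounded below and lower semicontinuous on $\S^2$. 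The non log-polarity of $\{V<+\infty\}$ is preserved by $p^{-1}$ and ensures $\inf\tilde I<+\infty$.

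Once on the sphere, items (1)--(3) follow by the standard direct method. Since $-\log\|y-y'\|$ is bounded below and lower semicontinuous on $\S^2\times\S^2$, and $\tilde V$ is bounded below and l.s.c., $\tilde I$ is weak-$*$ lower semicontinuous on the weak-$*$ compact set $\mathcal M_1(\S^2)$ (via a monotone truncation of $-\log$ together with Fatou's lemma), so a minimizer $\tilde\mu_V$ exists. Uniqueness follows from the strict convexity of the quadratic form $\sigma\mapsto-\iint\log\|y-y'\|\,d\sigma\,d\sigma$ on signed measures of zero total mass with finite logarithmic energy, a classical fact established via Fourier-type arguments. Pushing forward by $p$ yields $\mu_V$; the finiteness of $\tilde I(\tilde\mu_V)$ and the fact that $\tilde V=+\infty$ is avoided by $\tilde\mu_V$ give both $\Sigma_V\subset\{V<+\infty\}$ and the non-log-polarity of $\Sigma_V$, since a probability measure with finite logarithmic energy cannot charge a log-polar set.

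For item (4), I would argue by perturbation. For $\nu\in\mathcal M_1(\R^2)$ with $I_V(\nu)<+\infty$ and $t\in(0,1)$, the convex combination $\mu_t:=(1-t)\mu_V+t\nu$ satisfies $I_V(\mu_t)\geq I_V(\mu_V)$; expanding the quadratic form and letting $t\to 0^+$ produces
$$\int_{\R^2}\bigl(2U^{\mu_V}(x)+V(x)\bigr)d\nu(x)\geq 2c_V,$$
with equality when $\nu=\mu_V$. Specializing $\nu$ to approximations of Dirac masses at points of $\Sigma_V$ (and using lower semicontinuity of $U^{\mu_V}+V/2$) yields \eqref{Frost2}; conversely, \eqref{Frost1} is obtained by assuming the set $\{2U^{\mu_V}+V<2c_V\}$ were not log-polar, which would allow the construction of a test measure $\nu$ with finite energy supported there, contradicting the displayed inequality. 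The main technical obstacle is precisely this rigorous handling of the quasi-everywhere assertions: one needs that non-log-polar subsets of $\{V<+\infty\}$ always support probability measures of finite logarithmic energy, which is the content of the Frostman type theory extended to the weak growth setting in \cite{Bloom:2014qy}, combined with the compatibility of log-polarity under stereographic projection away from $N$.
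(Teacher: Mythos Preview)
The paper does not actually prove Theorem~\ref{equi}: it is stated as a known result, attributed to \cite{Frost,SaffTotik,Hardy,Hardy2,Bloom:2014qy}, and simply recalled without proof. Your sketch is essentially an outline of the argument in those references---in particular the compactification via inverse stereographic projection of Hardy--Kuijlaars and the Frostman-type analysis of Bloom--Levenberg--Wielonsky---so it is consistent with what the cited literature does, but there is no ``paper's own proof'' to compare against here.
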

\begin{remark}
In particular we have $\displaystyle U^{\mu_V}(x)+\frac{V(x)}{2}=c_V$ q.e. on $\Sigma_V$.
\end{remark}

 As  in \cite{Electsphere}, \cite{Dragnev:2002}, or more recently in  \cite{Hardy}, the hypothesis of Theorem~\ref{equi} are usefully transported to the sphere  $\mathcal{S}$ in $\R^3$ centred at $(0,0,1/2)$ with radius $1/2$,  by the inverse stereographic projection $T:\R^2\to \mathcal{S}$  defined by
\begin{equation*}
T(x_1,x_2)=\left(\frac{x_1}{1+|x|^2}, \frac{x_2}{1+|x|^2},\frac{|x|^2}{1+|x|^2} \right), \text{for any } x=(x_1,x_2)\in\R^2.
\end{equation*}
We know that $T$ is a conformal homeomorphism from $\R^2$ to $\mathcal{S}\backslash \{N\}$ where $N :=(0,0,1)$ is the North pole of $\mathcal{S}$. 

The procedure is as follows: Given $V:\R^2\to\R$,  we may define (see \cite{Hardy}) $\mathcal V:\mathcal S\to \R$  by letting
\begin{equation}\label{transport}\mathcal V(T(x)) = V(x) - \log(1+|x|^2), \quad \mathcal V(N) = \liminf_{|x|\to +\infty} \{V(x) - \log(1+|x|^2)\}.\end{equation}
Then   $V$ satisfies the hypothesis of Theorem~\ref{equi} if and only if $\mathcal V$  is a lower semicontinuous function on $\mathcal S$ which is finite on a nonpolar set. Therefore, in this case, the minimum of 
$$I_{\mathcal V}(\mu) := \iint_{\mathcal S\times\mathcal S}\left( -\log\|x-y\| +\frac{\mathcal V(x)}2 + \frac{\mathcal V(y)}2 \right)\,d\mu(x)\,d\mu(y)$$
among probability measures on $\mathcal S$ is achieved. Here $\|x-y\|$ denotes the euclidean norm in $\R^3$. Moreover, see \cite{Hardy}, the minimizer $\mu_{\mathcal V}$ is related to $\mu_V$ by the following relation
\begin{equation}\label{pullback}\mu_{\mathcal V} = T\#\mu_V,\end{equation}
where $T\#\mu$ denotes the push-forward of the measure $\mu$ by the map $T$. 

\label{ad}
\begin{defi} \label{admissiblef}
We say that $V:\R^2\to \R$ is \textbf{admissible} if it is of class $C^3$ and if, defining $\mathcal V$ as above, 
\begin{enumerate}
\item \textnormal{\textbf{(H1)}}: The set $\{x\in \R^2 ; V(x)<+\infty\}$ is not log-polar and $\displaystyle \liminf_{|x|\to +\infty}\{V(x)-\log(1+|x|^2)  \}>-\infty.$
\item \textnormal{\textbf{(H2)}}: The equilibrium measure $\mu_{\mathcal V}$ is of the form $m_{\mathcal V}(x)\indic_{\Sigma_\V}(x) \,dx$, where $m_{\mathcal V}$ is a $C^1$ function on $\mathcal S$ and $dx$ denotes the surface element on $\mathcal S$, where the function $m_{\mathcal V}$ is bounded above and below by positive constants $\bm$ and $\mb$, and where $\Sigma_{\V}$ is a compact subset of $\ms$ with $C^1$ boundary.
\end{enumerate}
\end{defi}

\begin{remark} Using (H2) and \eqref{pullback}, we find that 
$$ d\mu_V(x) = m_V(x) \indic_{\Sigma_V} \,dx,$$
where $\Sigma_V = T^{-1}(\Sigma_\V)$ and 
\begin{equation}\label{mv}m_V(x) = \frac{m_{\mathcal V}(T(x))}{(1+|x|^2)^2}.\end{equation}
Note that  $(1+|x|^2)^{-2}$ is the Jacobian of the transformation $T$.
\end{remark}

\section{Splitting Formula}\label{SpF}
Assume  $V$ is  admissible. We define as in \cite{2DSandier} the blown-up quantities:
\begin{equation*}
x'=\sqrt{n}x, \qquad m_V'(x')=m_V(x), \qquad d\mu_V'(x')=m_V'(x')dx'
\end{equation*}
and we define
\begin{equation}\label{zetadefinition}
\displaystyle \zeta(x):=U^{\mu_V}(x)+\frac{V(x)}{2}-c_V,
\end{equation}
where $c_V$ is the Robin constant given in \eqref{cvdef}. Then by \eqref{Frost1} and \eqref{Frost2}, $\zeta(x)=0$ q.e. in $\Sigma_V$ and $\zeta(x)\geq 0$ q.e. in $\R^2\backslash \Sigma_V$.\\
To any $n$-tuple of points $(x_1,...,x_n)\in (\R^2)^n$, we will now associate several quantities. First the probability measure 
\begin{equation}\label{nun}\nu_n=\frac1n\sum_{i=1}^n\delta_{x_i},\end{equation}
then the potential
\begin{equation}\label{defHn}
H_n:=-2\pi n \Delta^{-1}(\nu_n-\mu_V)=- n \int_{\R^2}\log|.-y|d(\nu_n-\mu_V)(y)=-\sum_{i=1}^n\log|.-x_i|-n U^{\mu_V}
\end{equation}
 where $\Delta^{-1}$ is the convolution operator with $\frac{1}{2\pi}\log|\cdot|$, hence such that $\Delta\circ\Delta^{-1}=\text{Id}$ where $\Delta$ denotes the usual Laplacian. We also define the rescaled measure (which is not a probability measure since it has mass $n$)
 \begin{equation}\label{nunp}\nu_n'=\sum_{i=1}^n\delta_{x_i'},\end{equation}
 and the rescaled potential
 \begin{equation}\label{hnp}
 H_n':=-2\pi\Delta^{-1}(\nu_n'-\mu_V').
 \end{equation}
Finally we will use the following notation for the associated electric field in rescaled coordinates
 \begin{equation}\label{enu}
 E_{\nu_n}:=-2\pi\Delta^{-1}(\nu_n'-\mu_V').
 \end{equation}
Note that even though $E_{\nu_n}$ is defined in rescaled variables, we do not use a prime in the notation to lighten notation.

 \begin{lemma}\label{convHn} Let $V$ be an admissible potential. Then we have
 \begin{equation}\label{cvHn}
 \lim_{R\to +\infty} \int_{B_R} H_n'(x)d\mu_V'(x)=\int_{\R^2} H_n'(x)d\mu_V'(x),\quad \lim_{R\to +\infty} W(\nabla H_n',\indic_{B_R})=W(\nabla H_n',\indic_{\R^2}).
 \end{equation}
 \end{lemma}
 \begin{proof} 
From Definition~\ref{defUmu} and \eqref{defHn} we have, letting $r = |x|$,  
 \begin{multline} \label{Hn}
 H_n(x)=\sum_{i=1}^n\int_{\R^2} \log\left( \frac{|x-y|}{|x-x_i|} \right)d\mu_V(y) = \\
 =\frac n2\int_{\R^2} \log\left(1-2\frac{x}{r}\cdot \frac yr + \frac{|y|^2}{r^2}\right)d\mu_V(y)- \frac12\sum_{i=1}^n \log\left(1-2\frac{x}{r}\cdot \frac{x_i}r + \frac{|x_i|^2}{r^2}\right).
 \end{multline}
 From \eqref{mv} we know that $d\mu_V(y) = m_V(y)\,dy$ where $|m_V(y)| < C/(1+|y|^2)^2$. By replacing in \eqref{Hn} and in the expression for $\nabla H_n(x)$ deduced frome \eqref{Hn} by differentiating, we easily deduce that if $|x| > R_0 := 2 \max_i |x_i|$ then 
 $$ |H_n(x)|\le \frac C{|x|},\quad  |\nabla H_n(x)|\le \frac C{|x|^2}.$$
Using \eqref{mv} again this implies   that $H_n\in L^1(\mu_V)$, hence the first equality in \eqref{cvHn}. This also implies that $|\nabla H_n|^2$ in in $L^1(\R^2\setminus B_{R_0})$. Then, since  
$$W(\nabla H_n,\indic_{\R^2})=W(\nabla H_n,\indic_{B_{R_0}})+\frac12 \int_{\R^2\setminus B_{R_0}}|\nabla H_n|^2,$$
the second equality in\eqref{cvHn} follows.
 \end{proof}
 
\begin{lemma} Let $V$ be admissible. Then, for every configuration $(x_1,...x_n)\in (\R^2)^n$, $n\geq 2$, we have
\begin{equation}
w_n(x_1,...,x_n)=n^2I_V(\mu_V)-\frac{n}{2}\log n +\frac{1}{\pi}W(E_{\nu_n},\indic_{\R^2})+2n\sum_{i=1}^n\zeta(x_i). \label{splitting}
\end{equation}
\end{lemma}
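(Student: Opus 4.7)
The plan is to start from $w_n$ and massage it into the claimed form by splitting the discrete logarithmic interaction with respect to the equilibrium measure, and then identifying the remaining self-energy term with the renormalized energy $W$. All the key analytic facts needed (continuity and decay at infinity of $H_n$ and $\nabla H_n$) are provided by the preceding Lemma~\ref{convHn}.

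First, I would expand the double sum using $\nu_n = (\nu_n - n\mu_V) + n\mu_V$. Since $\mu_V = m_V\,dx$ is absolutely continuous, the off-diagonal restriction is irrelevant whenever $\mu_V$ appears and only the bona fide diagonal singularity of $\nu_n \otimes \nu_n$ survives, giving
\begin{equation*}
-\sum_{i\neq j}\log|x_i-x_j| \;=\; \iint_{x\neq y}-\log|x-y|\,d(\nu_n-n\mu_V)^{\otimes 2} \;+\; 2n\sum_i U^{\mu_V}(x_i) \;-\; n^2 I_0(\mu_V).
\end{equation*}
Then I would rewrite the potential term using the definition \eqref{zetadefinition} of $\zeta$, namely $V(x) = 2c_V + 2\zeta(x) - 2U^{\mu_V}(x)$; the $-2n\sum_i U^{\mu_V}(x_i)$ arising this way cancels the cross term above. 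Using the identities $\int V\,d\mu_V = 2I_V(\mu_V)-2c_V$ and $I_0(\mu_V)=I_V(\mu_V)-\int V\,d\mu_V$, the remaining constants combine into $n^2 I_V(\mu_V)$, yielding
\begin{equation*}
w_n \;=\; n^2 I_V(\mu_V) \;+\; 2n\sum_i \zeta(x_i) \;+\; \iint_{x\neq y}-\log|x-y|\,d(\nu_n-n\mu_V)^{\otimes 2}.
\end{equation*}

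Next, I would identify the last integral with $\tfrac{1}{\pi}W(\nabla H_n,\indic_{\R^2})$. On $\Omega_{R,\eta}:=B_R\setminus\bigcup_i B(x_i,\eta)$, integration by parts and the relation $-\Delta H_n = -2\pi n \mu_V$ (valid on $\Omega_{R,\eta}$ since the Dirac masses are excluded) give
\begin{equation*}
\int_{\Omega_{R,\eta}}|\nabla H_n|^2 \;=\; -2\pi n\int_{\Omega_{R,\eta}}H_n\,d\mu_V + \sum_i\int_{\partial B(x_i,\eta)}H_n\,\partial_\nu H_n\,ds + \int_{\partial B_R}H_n\,\partial_\nu H_n\,ds.
\end{equation*}
Using the local expansion $H_n(x) = -\log|x-x_i| + R_i(x)$ with $R_i$ continuous at $x_i$, the inner boundary integrals combine to $-2\pi n\log\eta + 2\pi\sum_i R_i(x_i) + o(1)$ as $\eta\to 0$; Lemma~\ref{convHn} gives the decay $H_n\nabla H_n = O(|x|^{-3})$ which kills the outer boundary contribution as $R\to\infty$ and legitimizes passing to the limit in the bulk term. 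Writing $\sum_i R_i(x_i) = -\sum_{i\neq j}\log|x_i-x_j| - n\sum_i U^{\mu_V}(x_i)$ and $\int H_n\,d\mu_V = \sum_i U^{\mu_V}(x_i) - nI_0(\mu_V)$, rearranging shows exactly
\begin{equation*}
\iint_{x\neq y}-\log|x-y|\,d(\nu_n-n\mu_V)^{\otimes 2} \;=\; \frac{1}{\pi}W(\nabla H_n,\indic_{\R^2}).
\end{equation*}

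Finally, I would pass from $H_n$ to $H_n'$. A direct computation using the rescaling $x'=\sqrt{n}x$ shows $H_n'(x')=H_n(x'/\sqrt{n})$, hence $|\nabla H_n'(x')|^2 = n^{-1}|\nabla H_n(x)|^2$, and the excluded balls $B(x_i',\eta)$ correspond to $B(x_i,\eta/\sqrt{n})$. The change of variable in the definition of $W$ gives
\begin{equation*}
W(\nabla H_n',\indic_{\R^2}) \;=\; W(\nabla H_n,\indic_{\R^2}) + \frac{n\pi}{2}\log n,
\end{equation*}
so that $\tfrac{1}{\pi}W(\nabla H_n,\indic_{\R^2}) = -\tfrac{n}{2}\log n + \tfrac{1}{\pi}W(\nabla H_n',\indic_{\R^2})$, which combined with the splitting above is exactly \eqref{splitting}. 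The main obstacle is really Step~3: the integration-by-parts argument must justify boundary terms at $\infty$ despite the noncompactness of $\Sigma_V$, but this is precisely what Lemma~\ref{convHn} is designed to handle.
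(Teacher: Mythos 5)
Your proposal is correct and follows essentially the same route as the paper: the paper's proof simply invokes the splitting computation of \cite[Lemma 3.1]{2DSandier} (which is exactly the algebra and integration by parts you write out, including the rescaling producing the $-\frac{n}{2}\log n$ term), observing that the only new point in the noncompact setting is that $H_n=O(|x|^{-1})$ and $\nabla H_n=O(|x|^{-2})$ from Lemma~\ref{convHn} make the boundary term $\int_{\partial B_R}H_n\,\partial_\nu H_n$ vanish and justify passing to the limit in the bulk term, which is precisely how you handle it.
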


\begin{proof}
We may proceed as in the proof of \cite[Lemma 3.1]{2DSandier} and make use of the Frostman type inequalities \eqref{Frost1} and \eqref{Frost2} and Lemma \ref{convHn}. The important point is that, as shown in the proof of the previous lemma, we have $H_n(x)=O(|x|^{-1})$ and $\nabla H_n(x)=O(|x|^{-2})$ as $|x|\to +\infty$ which implies, exactly like in the case of compact support, that
$$
\lim_{R\to +\infty} \int_{\partial B_R} H_n(x)\nabla H_n(x).\vec{\nu}(x)dx=0
$$
where $\vec{\nu}(x)$ is the outer unit normal vector at $x\in \partial B_R$.
\end{proof}

\section{Lower bound}\label{UsLem}
Here we follow the strategy of \cite{2DSandier}, pointing out the required modifications in the noncompact case.

\subsection{Mass spreading result and modified density $g$}
We have the following result  from \cite[Proposition 3.4]{2DSandier}:
\begin{lemma} Let $V$ be admissible and assume  $(\nu,E)$ are such that $\displaystyle \nu=\sum_{p\in\Lambda}\delta_p$ for some finite subset $\Lambda \subset \R^2$ and $\divergence E=2\pi(\nu -m_V)$, $\curl E=0$ in $\R^2$. Then, given any $\rho>0$ there exists a signed measure $g$ supported on $\R^2$ and such that:
\begin{itemize}
\item There exists a family $\mathcal{B}_\rho$ of disjoint closed balls covering $\supp(\nu)= \Lambda$ such that the sum of the radii of the balls intersecting any ball of radius
1 is bounded by  $\rho$~; furthermore,
\begin{equation*}
g(A)\geq -C(\|m_V\|_\infty+1)+\frac{1}{4}\int_A|E(x)|^2\indic_{\Omega\backslash\mathcal{B}_\rho}(x)dx, \quad  \text{ for any } A\subset\R^2,
\end{equation*}
where $C$ depends only on $\rho$;\\
\item we have
\begin{equation*} dg(x)=\frac{1}{2}|E(x)|^2dx \quad \text{ outside } \bigcup_{p\in\Lambda}B(p,\lambda)
\end{equation*}
where $\lambda$ depends only on $\rho$;\\
\item there exists $\lambda, C>0$ depending only on $\rho$ such that for any function $\chi$ compactly supported in $\R^2$ we have
\begin{equation*}
\left|W(E,\chi)-\int\chi dg  \right|\leq CN(\log N +\|m_V\|_\infty)\|\nabla\chi\|_\infty
\end{equation*}
where $N=\string #\{p\in\Lambda ; B(p,\lambda)\cap \supp(\nabla\chi)\neq\emptyset  \}$;\\
\item for any $U\subset \Omega$
\begin{equation*}\string #(\Lambda \cap U)\leq C(1+\|m_V\|^2_\infty|\hat{U}|+g(\hat{U}))
\end{equation*}
where $\hat{U}:=\{x\in \R^2 ; d(x,U)<1\}$.
\end{itemize}
\end{lemma}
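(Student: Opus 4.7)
The plan is to invoke \cite[Proposition 3.4]{2DSandier} essentially verbatim, observing that the construction there depends on the background density $m_V$ only through its global $L^\infty$ bound, which under hypothesis (H2) together with \eqref{mv} is the finite constant $\bar m$. Consequently the non-compact support of $\mu_V$ is irrelevant to the argument; the signed measure $g$ is produced from $E$ in an entirely local fashion.

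First I would apply the Jerrard--Sandier--Serfaty growing-ball construction: for the prescribed $\rho>0$ it yields a disjoint family $\mathcal{B}_\rho$ of closed balls covering $\supp\nu$, with radii summing to at most $\rho$ over any unit ball, and on each ball $B\in\mathcal{B}_\rho$ of radius $r_B$ containing $n_B$ points of $\Lambda$ the annular energy lower bound
\[
\frac{1}{2}\int_{B\setminus\bigcup_{p\in\Lambda}B(p,\eta)}|E(x)|^2\,dx \;\geq\; \pi n_B\log\frac{r_B}{\eta\, n_B}-C\|m_V\|_\infty r_B^2
\]
holds for every $\eta$ smaller than the smallest ball radius. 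This is the analytic core of the argument and is taken verbatim from \cite{Sandier_Serfaty}; all that is needed from the background is that $m_V\in L^\infty(\R^2)$.

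Second, I would define $g$ by setting $dg=\tfrac{1}{2}|E|^2\,dx$ outside $\bigcup_{p\in\Lambda}B(p,\lambda)$, where $\lambda=\lambda(\rho)>0$ is fixed so that each $B(p,\lambda)$ lies inside a ball of $\mathcal{B}_\rho$, and by placing at the center of each $B\in\mathcal{B}_\rho$ a suitable Dirac mass so that $\int\chi\,dg$ reproduces the renormalized energy integrand \eqref{W} inside $B$ up to oscillations of $\chi$. With this choice the four required properties follow: (i) the lower bound on $g(A)$ reduces to the annular bound plus a $-C(\|m_V\|_\infty+1)$ error from the point masses; (ii) equality to $\tfrac{1}{2}|E|^2$ outside small balls is built in; (iii) \eqref{distW} follows by comparing $W(E,\chi)$ with $\int\chi\,dg$ ball by ball, each ball contributing at most $Cr_B\|\nabla\chi\|_\infty(\log N+\|m_V\|_\infty)$; (iv) \eqref{points} is obtained by summing $n_B$ over balls meeting $U$ and converting via the annular inequality to a statement about $g(\hat U)$.

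The only genuinely new point compared to \cite{2DSandier} is the verification that nothing in the ball construction requires $\supp\mu_V$ to be bounded: what matters is only the global bound $\|m_V\|_\infty\leq\bar m$ granted by (H2). The main obstacle in the original argument is the construction of the growing balls itself, which we take as a black box; here no new obstacle appears, only the observation that the uniform $L^\infty$ bound on $m_V$ on the whole plane replaces the previously implicit $L^\infty(\Sigma_V)$ bound and thus all estimates pass to the noncompact setting without modification.
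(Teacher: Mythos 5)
Your proposal is correct and matches the paper's treatment: the paper gives no independent proof of this lemma but simply quotes it as \cite[Proposition 3.4]{2DSandier}, exactly as you do, the only observation needed being that the construction is local and uses $m_V$ only through its uniform bound $\bar m$ from (H2) and \eqref{mv}, so the noncompactness of $\Sigma_V$ plays no role.
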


\begin{defi} \label{massdefi} Assume $\displaystyle \nu_n=\frac1n\sum_{i=1}^n\delta_{x_i}$. Defining $ \nu'_n$  and $E_{\nu_n}$ as in \eqref{nunp} and \eqref{enu}, we denote by $g_{\nu_n}$ the result of applying the previous Lemma to $(\nu'_n,E_{\nu_n})$.
\end{defi}
The following result  \cite[Lemma 3.7]{2DSandier} connects $g$ and the renormalized energy.
\begin{lemma} \textnormal{(\cite{2DSandier})} For any $\displaystyle \nu_n=\sum_{i=1}^n\delta_{x_i}$, we have
\begin{equation} \label{Wg} W(E_{\nu_n},\indic_{\R^2})=  \int_{\R^2}dg_{\nu_n}.
\end{equation}
\end{lemma}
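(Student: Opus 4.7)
The natural strategy is to apply the approximation estimate \eqref{distW} with the cutoff $\chi_{B_R}$ and then pass to the limit $R\to+\infty$. Since $\nu_n'=\sum_{i=1}^n\delta_{x_i'}$ has only finitely many atoms, all confined to some fixed bounded set, one can choose $R$ large enough so that $\max_i |x_i'|+\lambda+1<R-1$. Then by \eqref{chi} the support of $\nabla\chi_{B_R}$ lies in the annulus $\{d(x,B_R^c)<1\}$, which does not meet any ball $B(x_i',\lambda)$. Consequently the integer $N$ appearing in \eqref{distW} is zero, and the inequality there becomes the exact equality
\begin{equation*}
W(\nabla H_n',\chi_{B_R})=\int_{\R^2}\chi_{B_R}\,dg_{\nu_n}.
\end{equation*}

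The remaining task is to pass to the limit $R\to+\infty$ in both sides of this identity. For the right-hand side I split $g_{\nu_n}=g_1+g_2$, where $g_2=\tfrac12|\nabla H_n'|^2\,dx$ on $\R^2\setminus\bigcup_iB(x_i',\lambda)$ and $g_1$ is supported on the finite union $\bigcup_iB(x_i',\lambda)$. The signed measure $g_1$ has bounded total variation by construction (the mass spreading lemma bounds the negative part, and its support has finite Lebesgue measure), while $g_2$ is a finite measure because $|\nabla H_n'|^2\in L^1(\R^2)$; the latter integrability follows from the decay $\nabla H_n(x)=O(|x|^{-2})$ established in the proof of Lemma~\ref{convHn}, transported to blown-up coordinates. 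Hence dominated convergence yields
\begin{equation*}
\lim_{R\to+\infty}\int_{\R^2}\chi_{B_R}\,dg_{\nu_n}=\int_{\R^2}dg_{\nu_n}.
\end{equation*}

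For the left-hand side, observe that for $R$ large the points $x_i'$ lie in the region $\{\chi_{B_R}=1\}$, so the log term in \eqref{W} satisfies $\pi\log\eta\sum_p\chi_{B_R}(p)=n\pi\log\eta$, matching the log term of $W(\nabla H_n',\indic_{B_R})$. Taking $\eta\to 0$ gives
\begin{equation*}
W(\nabla H_n',\chi_{B_R})-W(\nabla H_n',\indic_{B_R})=\tfrac12\int_{\R^2}(\chi_{B_R}-\indic_{B_R})|\nabla H_n'|^2\,dx,
\end{equation*}
and this difference vanishes as $R\to+\infty$ since the integrand is supported in $B_R\setminus B_{R-1}$ and dominated by the $L^1$-function $|\nabla H_n'|^2$. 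Combining with Lemma~\ref{convHn} (applied to $H_n'$) one obtains $W(\nabla H_n',\chi_{B_R})\to W(\nabla H_n',\indic_{\R^2})$, which concludes the proof of \eqref{Wg}.

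The computations are straightforward; the only genuinely delicate point is justifying the two dominated convergence arguments above. Both rest on the same input, namely the $L^1$ integrability of $|\nabla H_n'|^2$ on $\R^2$, which is precisely what Lemma~\ref{convHn} (together with the trivial scaling between $H_n$ and $H_n'$) provides. Once this is granted, the equality falls out cleanly from the vanishing of $N$ in the estimate \eqref{distW}.
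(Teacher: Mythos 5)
Your proof is correct, but note that the paper itself offers no proof to compare with: the lemma is quoted directly from \cite{2DSandier} (Lemma 3.7 there), and your argument is essentially a reconstruction of the proof given in that reference. The structure is the right one: for fixed $n$ the balls $B(x_i',\lambda)$ lie in a fixed compact set, so for $R$ large enough no ball meets $\supp(\nabla\chi_{B_R})$, the error in \eqref{distW} disappears, and one passes to the limit $R\to+\infty$ using the decay of $\nabla H_n'$ at infinity, exactly as in Lemma~\ref{convHn}. Two points deserve slightly more care than you give them. First, the exact equality when $N=0$ is better justified by recalling that the error in \eqref{distW} is a sum of contributions of the balls meeting $\supp(\nabla\chi)$ (so it vanishes when there are none), rather than by formally reading $N(\log N+\|m_V\|_\infty)=0$ in the stated bound, which is ambiguous at $N=0$. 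Second, $|\nabla H_n'|^2$ is \emph{not} in $L^1(\R^2)$: it behaves like $|x-x_i'|^{-2}$ near each point. What you actually need, and what the decay $\nabla H_n(x)=O(|x|^{-2})$ from the proof of Lemma~\ref{convHn} provides after the trivial rescaling, is integrability of $|\nabla H_n'|^2$ outside $\bigcup_i B(x_i',\lambda)$; this suffices both for the finiteness of your $g_2$ and for the annulus estimate on the left-hand side, since for $R$ large the annulus $\{d(x,B_R^c)<1\}$ avoids all the balls. Also, for the part $g_1$ supported on $\bigcup_i B(x_i',\lambda)$ no dominated convergence is needed: $\chi_{B_R}\equiv 1$ on that fixed compact set once $R$ is large, so $\int\chi_{B_R}\,dg_1=g_1(\R^2)$ exactly. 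With these clarifications your argument is complete and matches the intended proof in \cite{2DSandier}.
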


\subsection{Ergodic Theorem}
We adapt the abstract setting in \cite[Section 4.1]{2DSandier}.  We are given a Polish space $X$, which is a space of functions, on which $\R^2$ acts continuously. 
We denote this action $(\lambda, u)\to \theta_\lambda u:=u(.+\lambda)$, for any $\lambda\in\R^2$ and $u\in X$. We assume it is continuous with respect to both $\lambda$ and $u$.

We also define $T^\varepsilon_\lambda$ and $T_\lambda$ acting on $\R^2\times X$, by $T^\varepsilon_\lambda(x,u):=(x+\varepsilon\lambda,\theta_\lambda u)$ and $T_\lambda(x,u):=(x,\theta_\lambda u)$.

For a probability measure $P$ on $\R^2\times X$ we say that $P$ is $T$-invariant if for every $\lambda\in\R^2$, it is invariant under the mapping $(x,u)\mapsto (x,\theta_{\lambda}u)$.

We let  $\{f_\ep\}_\ep$, and $f$ be measurable functions defined on $\R^2\times X$ which satisfy the following properties. For any  sequence $\{x_\varepsilon,u_\varepsilon\}_\varepsilon$ such that $x_\varepsilon \to x$ as $\varepsilon\to 0$  and such that for any $R>0$, 
$$
\limsup_{\varepsilon \to 0} \int_{B_R}f_\varepsilon(x_\varepsilon+\varepsilon\lambda,\theta_\lambda u_\varepsilon)d\lambda <+\infty,
$$
we have 
\begin{enumerate}
 \item (Coercivity) $\{u_\varepsilon\}_\varepsilon$ has a convergent subsequence;
 \item ($\Gamma$-liminf) If $\{u_\varepsilon\}_\varepsilon$ converges to $u$, then $\displaystyle \liminf_{\varepsilon\to 0}f_\varepsilon (x_\varepsilon, u_\varepsilon)\geq f(x,u)$.
 \end{enumerate}
\begin{remark}
In contrast with the compact case, not every sequence $\{x_\varepsilon\}$ has a convergent subsequence, hence convergence needs to be assumed.
\end{remark}

Now let $V$ be an admissible potential on $\R^2$ and $\mu_V$ its associated equilibrium measure.  We have 

\begin{thm} \label{erg} Let $V$, $X$, $(f_\varepsilon)_\varepsilon$ and $f$ be as above. We define 
$$
\displaystyle F_\varepsilon(u):=\int_{\R^2}f_\varepsilon(x,\theta_{\frac{x}{\varepsilon}}u)d\mu_V(x)
$$
Assume  $(u_\varepsilon)_\varepsilon\in X$ is  a sequence such that $F_\varepsilon(u_\varepsilon)\leq C$ for any $\varepsilon>0$. Let $P_\varepsilon$ be the image of $\mu_V$ by $x\mapsto (x,\theta_{\frac{x}{\varepsilon}}u_\varepsilon)$, then:
\begin{enumerate}
\item $(P_\varepsilon)_\varepsilon$ admits a convergent subsequence to a probability measure $P$,
\item the first marginal of $P$ is $\mu_V$,
\item $P$ is $T$-invariant,
\item for $P-a.e.$ $(x,u)$, $(x,u)$ is of the form $\displaystyle\lim_{\varepsilon\to 0}(x_\varepsilon,\theta_{\frac{x_\varepsilon}{\varepsilon}}u_\varepsilon)$,
\item $\displaystyle \liminf_{\varepsilon\to 0} F_\varepsilon(u_\varepsilon)\geq \int_{\R^2\times X} f(x,u) dP(x,u)$.
\item Moreover we have
\begin{equation*}\int_{\R^2\times X} f(x,u)dP(x,u)=\int_{\R^2\times X}\left(\lim_{R\to +\infty}\dashint_{B_R}f(x,\theta_\lambda u)d\lambda \right)dP(x,u). 
\end{equation*}
where $\dashint_{B_R}$ denote the integral average over $B_R$.
\end{enumerate}
\end{thm}

\begin{proof} The proof follows \cite{Sandier_Serfaty,2DSandier} but with $\mu_V$ replacing  the normalized Lebesgue measure on a compact set  $\Sigma_V$. We sketch it and detail the parts where modifications are needed. For any $R>0$ we let $\mu_V^R$ denote the restriction of $\mu_V$ to $B_R$, and $ P_\ep^R$ denote the image of $\mu_V^R$  by the map $x\mapsto (x,\theta_{\frac{x}{\varepsilon}}u_\varepsilon)$.\medskip

\noindent\textbf{Step 1: Convergence of a subsequence of $(P_\varepsilon)$ to a probability measure $P$.} It suffices to prove that the sequence $\{P_\ep\}_\ep$ is tight. From  \cite{Sandier_Serfaty,2DSandier}, which deals with the compact case, $\{P_\ep^R\}_\ep$ is tight, for any $R>0$.

Now take any $\delta>0$, we need to prove that there exists a compact subset $K_\delta$ of $\R^2\times X$ such that $P_\ep(K_\delta)>1-\delta$ for any $\ep>0$. For this we first choose $R>0$ large enough so that $(\mu_V - \mu_V^R)(\R^2)<\delta/2$. This implies that $P_\ep^R$ has total measure at least $1-\delta/2$ and then we may use the tightness of $\{P_\ep^R\}_\ep$ to find that there exists a compact set $K_\delta$ such that $P_\ep^R(K_\delta)>1-\delta$. It follows that $P_\ep(K_\delta)>1-\delta$, and then that $\{P_\ep\}_\ep$ is tight.\medskip

\noindent \textbf{Step 2: $P$ is $T$-invariant.} Let $\lambda\in\R^2$, let  $\Phi$ be a bounded continuous function on $\R^2\times X$ and let $P_\lambda$ be the image of $P$ by $(x,u)\mapsto (x,\theta_{\lambda}u)$. By the change of variables $y=\varepsilon\lambda+x=(\varepsilon\lambda+I_2)(x)$ and   for a subsequence $\varepsilon\to 0$ along which $P_\varepsilon\to P$, we obtain,
\begin{align*}
\int_{\R^2\times X}\Phi(x,u)dP_\lambda(x,u) &=\int_{\R^2\times X}\Phi(x,\theta_{\lambda}u)dP(x,u)\\
&= \lim_{\varepsilon\to 0}\int_{\R^2\times X}\Phi(x,\theta_{\lambda}u)dP_\varepsilon(x,u)\\
&= \lim_{\varepsilon\to 0}\int_{\R^2}\Phi(x,\theta_{\lambda+\frac{x}{\varepsilon}}u_\varepsilon)d\mu_V(x)\\
&= \lim_{\varepsilon\to 0}\int_{\R^2}\Phi(x,\theta_{\frac{\varepsilon\lambda+x}{\varepsilon}}u_\varepsilon)d\mu_V(x)\\
&= \lim_{\varepsilon\to 0}\int_{\R^2}{\Phi\left(y-\lambda\varepsilon,\theta_{\frac{y}{\varepsilon}}u_\varepsilon  \right)m_V(y-\lambda\varepsilon)dy}.
\end{align*}

From the boundedness of  $\Phi$ and  the decay properties of $m_V$ (see \eqref{mv}) it is straight forward to check  that, along the same subsequence $\varepsilon\to 0$,
\begin{align*}
\lim_{\varepsilon\to 0}\int_{\R^2}{\Phi\left(y-\lambda\varepsilon,\theta_{\frac{y}{\varepsilon}}u_\varepsilon  \right)m_V(y-\lambda\varepsilon)dy}
 &= \lim_{\varepsilon\to 0}\int_{\R^2}\Phi\left(y-\lambda\varepsilon,\theta_{\frac{y}{\varepsilon}}u_\varepsilon  \right) m_V(y)\,dy\\
&=\lim_{\varepsilon\to 0}\int_{\R^2\times X}\Phi\left(y-\lambda\varepsilon,u\right)dP_\varepsilon(y,u).
\end{align*}
Then, arguing as in \cite{2DSandier} using the tightness of $(P_\varepsilon)_\varepsilon$ we obtain 
\begin{align*}
\lim_{\varepsilon\to 0}\int_{\R^2\times X}\Phi\left(y-\lambda\varepsilon,u\right)dP_\varepsilon(y,u)&=\lim_{\varepsilon\to 0}\int_{\R^2\times X}\Phi(y,u)dP_\varepsilon(y,u)\\
&=\int_{\R^2\times X}\Phi(x,u)dP(x,u),
\end{align*}
which concludes the proof that $\displaystyle \int_{\R^2\times X}\Phi(x,u)dP_\lambda(x,u) = \int_{\R^2\times X}\Phi(x,u)dP(x,u)$, i.e. that $P$ is $T$-invariant.\medskip

\noindent Items 2 and 4 in the theorem are obvious consequences of the definition of $P$ and  items $5$ and  $6.$ require no modification from \cite{2DSandier}. We have proved above items 1 and 3.
\end{proof}

\section{Asymptotic Expansion of the Hamiltonian}\label{AEOTH}
We define \begin{equation*}\alpha_V:=\frac{1}{\pi}\int_{\R^2}\min_{\mathcal{A}_{m_V(x)}}Wdx=\frac{1}{\pi}\min_{\mathcal{A}_1}W-\frac{1}{2}\int_{\R^2}m_V(x)\log m_V(x)dx,\end{equation*}
where the equality is a consequence of  \eqref{scaling}. The fact  that $\alpha_V$ is finite follows from     \eqref{mv}, which ensures that the integral converges.

Recalling the notations \eqref{zetadefinition}-\eqref{enu}, we define
\[ F_n(\nu)=
\left\{
\begin{array}{ll}
\displaystyle \frac{1}{n}\left(\frac{1}{\pi}W(E_n,\indic_{\R^2})+2n\int \zeta d\nu  \right) &\mbox{if $\nu$ is of the form $\displaystyle \frac1n\sum_{i=1}^n\delta_{x_i}$,}\\
+\infty &\mbox{otherwise.}
\end{array}
\right. \]
and set, for any measure $\nu_n$ of the form \eqref{nun}, 
\begin{equation*} 
P_{\nu_n}:=\int_{\R^2}\delta_{(x,E_{\nu_n}(x\sqrt{n}+.))}d\mu_V(x).
\end{equation*}
The following result extends  \cite[Theorem 2]{2DSandier} to a  class of equilibrium measures with possibly unbounded support, which requires a restatement which makes it slightly different from its counterpart in \cite{2DSandier}. It is essentially  a Gamma-Convergence  (see \cite{BraidesGamma}) statement, consisting  of  a lower bound and an upper bound, the two implying 
the convergence of $\displaystyle \frac{1}{n}\left[w_n(x_1,...,x_n)-n^2I_V(\mu_V)+\frac{n}{2}\log n\right]$ to $\alpha_V$ for a minimizer $(x_1,...,x_n)$ of $w_n$.

\subsection{Main result}
\begin{thm} \label{THM} Let $1<p<2$ and $X=\R^2\times L_{loc}^p(\R^2,\R^2)$. Let $V$ be an admissible function.\\
\textnormal{\textbf{A. Lower bound:}} Let $(\nu_n)_n$ such that $F_n(\nu_n)\leq C$, so that in particular $\nu_n$ is of the form \eqref{nun} for every $n$. Then:
\begin{enumerate}
\item $P_{\nu_n}$ is a probability measure on $X$ and  $(P_{\nu_n})_n$ admits a subsequence which converges to a probability measure $P$ on $X$,
\item the first marginal of $P$ is $\mu_V$,
\item $P$ is $T$-invariant,
\item $E\in \mathcal{A}_{m_V(x)}$ for  $P$-a.e. $(x,E)$,
\item we have the lower bound 
\begin{equation}\label{LB}
 \liminf_{n\to +\infty}\frac{1}{n\pi}W(E_{\nu_n},\indic_{\R^2}) \geq \frac{1}{\pi}\int_{\R^2} \frac{W(E)}{m_V(x)}dP(x,E)\geq \alpha_V.
 \end{equation}
\end{enumerate}
\textnormal{\textbf{B. Upper bound.}} Conversely, assume $P$ is a $T$-invariant probability measure on $X$ whose first marginal is $\mu_V$ and such that for $P$-almost every $(x,E)$ we have $E\in \mathcal{A}_{m_V(x)}$. Then there exist a sequence $\{\nu_n=\sum_{i=1}^n\delta_{x_i}\}_n$ of measures on $\R^2$ and a sequence $\{E_n\}_n$ in $L_{loc}^p(\R^2,\R^2)$ such that $\divergence E_n=2\pi(\nu'_n-m_V')$ and such that, defining $\displaystyle P_n=\int_{\R^2}\delta_{(x,E_n(x\sqrt{n}+.))}d\mu_V(x)$, we have $P_n \to P$ as $n\to +\infty$ and
\begin{equation}\label{UB}
\limsup_{n\to +\infty} F_n(\nu_n)\leq \frac{1}{\pi}\int_{\R^2} \frac{W(E)}{m_V(x)}dP(x,E).
\end{equation}
\textnormal{\textbf{C. Consequences for minimizers.}} Let for any $n$,  $(x_1,...,x_n)$ denote a minimizer of  $w_n$ and let $\displaystyle \nu_n=\sum_{i=1}^n\delta_{x_i}$. Then, for any weak subsequential limit $P$ of $(P_{\nu_n})_n$ we have:
\begin{enumerate}
\item for $P$-almost every $(x,E)$, $E$ minimizes $W$ over $\mathcal{A}_{m_V(x)}$;
\item we have 
\begin{equation*} 
\lim_{n\to +\infty} F_n(\nu_n)=\lim_{n\to +\infty} \frac{1}{n\pi}W(E_{\nu_n},\indic_{\R^2}) = \frac{1}{\pi}\int_{\R^2} \frac{W(E)}{m_V(x)}dP(x,E)= \alpha_V,
\end{equation*}
hence we obtain the following asymptotic expansion, as $n\to +\infty$:
\begin{equation*}  \min_{(\R^{2})^n} w_n=I_V(\mu_V)n^2-\frac{n}{2}\log n +\alpha_V n + o(n).
\end{equation*}
\end{enumerate}
\end{thm}

\subsection{Proof of the lower bound}
We follow the same lines as in \cite[Section 4.2]{2DSandier}. Because  $F_n(\nu_n)\leq C$ and \eqref{splitting}, we have that $$\frac1{n^2} w_n(x_1,\dots x_n)\to I_V(\mu_V),$$ therefore $\nu_n$ converges to $\mu_V$ (this follows from the results in \cite{Hardy}).

We let $\nu'_n = \sum_i \delta_{x_i'}$, and $E_{\nu_n}$, $H_n'$, $g_n$ be as in  Definition \ref{massdefi}.

Let $\chi$ be a $C^\infty$ cutoff function supported on  the unit ball $B_1$ and with integral equal to 1. We define
\begin{equation*}
\mathbf{f}_n(x,\nu,E,g):=
\left\{ 
       \begin{array}{ll}
       \displaystyle\frac{1}{\pi}\int_{\R^2}\frac{\chi(y)}{m_V(x)}dg(y) & \mbox{if $(\nu,E,g)=\theta_{\sqrt{n}x}({\nu}'_n,E_{\nu_n},{g}_n)$,} \\
       +\infty & \mbox{otherwise.}
       \end{array}
       \right.
\end{equation*}
As in \cite[Section 4.2, Step 1]{2DSandier}, if we let 
\begin{equation*}
\mathbf{F}_n(\nu,E,g):=\int_{\R^2}\mathbf{f}_n\left(x, \theta_{x\sqrt{n}}(\nu,E,g)\right)d\mu_V(x),\end{equation*}
then 
\begin{align*}\mathbf F_n(\nu_n', E_{\nu_n},g_n) 
= \int_{\R^2}\frac{1}{\pi}\int_{\R^2}\frac{\chi(y)}{m_V(x)}d(\theta_{x\sqrt{n}}\# g)d\mu_V(x) &= 
\frac{1}{\pi}\int_{\R^2}\int_{\R^2}\chi(y-x\sqrt{n})dxd{g}_n(y)\\
&\leq \frac{1}{n\pi}W(E_{\nu_n},\indic_{\R^2})+\frac{{g}_n^{-}(U^c)}{n\pi},
\end{align*}
by \eqref{Wg}, where $\displaystyle U=\{x':d(x',\R^2\backslash \Sigma)\geq 1\}$. As in \cite{2DSandier}, we have ${g}_n^{-}(U^c)=o(n)$. Hence, if $(\nu,E,g)=({\nu}'_n,E_{\nu_n},{g}_n)$, as $n\to +\infty$:
\begin{equation*} \mathbf{F}_n(\nu,E,g)\leq \frac{1}{n\pi}W(E_{\nu_n},\indic_{\R^2})+o(1),
\end{equation*}
and $\mathbf{F}_n(\nu,E,g)=+\infty$ otherwise.\\

Now, as in \cite{2DSandier}, we want to use Theorem~\ref{erg} with $\varepsilon=\frac{1}{\sqrt{n}}$ and $X=\mathcal{M}_+\times L_{loc}^p(\R^2,\R^2)\times \mathcal{M}$ where $p\in]1,2[$, $\mathcal{M}_+$ is the set of nonnegative Radon measures on $\R^2$ and $\mathcal{M}$ the set of Radon measures bounded below by $-C_V:=-C(\|m_V\|_\infty^2+1)$. Let $Q_n$ be the image of $\mu_V$ by $x\mapsto (x,\theta_{x\sqrt{n}}(\bar{\nu}'_n,E_{\nu_n},{g}_n))$. We have:\\ 

1) The fact that $\textbf{f}_n$ is coercive is proved as in \cite[Lemma 4.4]{2DSandier}. Indeed, if $(x_n,\nu(n),E(n),g(n))_n$ is such that $x_n\to x$ and, for any $R>0$,
\begin{equation*} \limsup_{n\to +\infty}\int_{B_R}\textbf{f}_n\left(x_n+\frac{\lambda}{\sqrt{n}},\theta_\lambda(\nu(n),E(n),g(n))\right)d\lambda <+\infty,
\end{equation*}
then the integrand is bounded for a.e. $\lambda$. By assumption on $\textbf{f}_n$, for any $n$,
$$
\theta_\lambda(\nu(n),E(n),g(n))=\theta_{x_n\sqrt{n}+\lambda}({\nu}_n',E_{\nu_n},{g}_n),
$$
hence it follows that
\begin{equation*}
(\nu(n),E(n),g(n))=\theta_{x_n\sqrt{n}}({\nu}_n',E_{\nu_n},{g}_n).
\end{equation*}
For any $R>0$, there exists $C_R>0$ such that for any $n>0$, 
\begin{align*}
\int_{B_R}\textbf{f}_n\left(x_n+\frac{\lambda}{\sqrt{n}},\theta_\lambda(\nu_n,E_{\nu_n},g_n)\right)d\lambda &=\int_{B_R}\frac{1}{\pi}\int_{\R^2}\frac{\chi(y)}{m_V\left(x_n+\frac{\lambda}{\sqrt{n}}\right)}d(\theta_{\lambda+x_n\sqrt{n}}\# {g}_n(y))d\lambda\\
&=\frac{1}{\pi}\int_{B_R}\int_{\R^2}\frac{\chi(y-x_n\sqrt{n}-\lambda)}{m_V\left(x_n+\frac{\lambda}{\sqrt{n}}\right)}d{g}_n(y)d\lambda\\
&=\frac{1}{\pi}\int_{\R^2}\chi\ast\left(\indic_{B_R(x_n\sqrt{n})}\frac{1}{m_V(./\sqrt{n})}  \right)(y)d{g}_n(y)<C_R.
\end{align*}
This, inequalities \eqref{mv} and the fact that ${g}_n$ is bounded below imply that ${g}_n(B_R(x_n\sqrt{n}))$ is bounded independently of $n$. Hence by the same argument as in  \cite[Lemma 4.4]{2DSandier}, we have the convergence of a subsequence of $(\nu(n),E(n),g(n))$.\\ 

2) We have the $\Gamma$-liminf property: if $(x(n),\nu(n),E(n),g(n))\to (x,\nu,E,g)$ as $n\to +\infty$, then, by Fatou's Lemma,
\begin{equation*}\liminf_{n\to +\infty} \mathbf{f}_n(x(n),\nu(n),E(n),g(n))\geq \mathbf f(x,\nu,E,g):=\frac{1}{\pi}\int\frac{\chi(y)}{m_V(x)} dg(y),
\end{equation*}
obviously if the left-hand side is finite. Therefore, Theorem \ref{erg} applies and implies that:\\
\begin{enumerate}
\item The sequence of measures $(Q_n)_n$ admits a subsequence which converges to a measure $Q$ which has $\mu_V$ as  first marginal.
\item  It holds that $Q$-almost every $\displaystyle (x,\nu,E,g)$ is of the form $\displaystyle \lim_{n\to +\infty}(x_n, \theta_{x_n\sqrt{n}}({\nu}'_n,E_{\nu_n},{g}_n))$.
\item The measure $Q$ is $T$-invariant.
\item We have $\displaystyle \liminf_{n\to +\infty} \mathbf{F}_n({\nu}'_n,E_{\nu_n},{g}_n)\geq \frac{1}{\pi} \int_{\R^2}\left(\int_{\R^2}\frac{\chi(y)}{m_V(x)} dg(y)\right) dQ(x,\nu,E,g)$.
\item $\displaystyle \frac{1}{\pi}\int \int \frac{\chi(y)}{m_V(x)}dg(y) dQ(x,\nu,E,g)=\int \left( \lim_{R\to +\infty}\dashint_{B_R} \int \frac{\chi(y-\lambda)}{m_V(x)}dg(y)d\lambda\right) dQ(x,\nu,E,g)$.
\end{enumerate}
Now we can follow exactly \cite[24, Section 4.2, Step 3]{2DSandier}. We
notice that $P_n$ is the marginal of $Q_n$ corresponding to the variables $(x,E)$, and deduce
from 4) that 
\begin{align*}
\liminf_{n\to+\infty} \frac{1}{n\pi}W(E_{\nu_n},\indic_{\R^2})&\geq\int \left( \int \chi dg \right)\frac{dQ(x,\nu,E,g)}{m_V(x)} \\
&= \int\lim_{R\to +\infty} \left( \frac{1}{\pi R^2}\int \chi\ast \indic_{B_R} dg\right)\frac{dQ(x,\nu,E,g)}{m_V(x)}\\
&\geq \frac{1}{\pi}\int W(E) \frac{dQ(x,\nu,E,g)}{m_V(x)}=\frac{1}{\pi}\int \frac{W(E)}{m_V(x)} dP(x,E).
\end{align*}
Thus the lower bound \eqref{LB} is proved. The fact that the right-hand side is larger than $\alpha_V$ is obvious because the first marginal of $\displaystyle \frac{dP}{m_V}$ is the Lebesgue measure.

\subsection{Proof of the upper bound, the case $\supp(\mu_V)\neq\R^2$} The discussion following Theorem~\ref{equi} permits to  reduce the case of  $V$'s such that $\supp (\mu_V)\neq \R^2$ to the case of a compact support. We now explain how this is done.

Since  $\supp (\mu_V)\neq \R^2$, there exists $y\in\ms$ which does not belong to the support of $\mu_\V$. Let $R$ be a rotation such that $R(N) = y$, then the minimum of  $I_{\V\circ R}$ is $\mu_{\V\circ R} = R^{-1}\#\mu_\V$ hence $N$ does not belong to its support. 

Letting  $\varphi = T^{-1} R T$, we have that $\varphi$  is of the form $z\to\ds \frac{az+b}{cz+d}$ with $ad-bc = 1$, and applying \eqref{transport}, \eqref{pullback} to $\V\circ R$ we have that 
$$\mu_{V_\varphi} = T^{-1}\#\mu_{\V\circ R},$$
where 
$$\mathcal V\circ R(T(x)) = V_\varphi(x) - \log(1+|x|^2).$$
This implies that $\mu_{V_\varphi}$ has compact support since $N$ does not belong to the support of $\mu_{\V\circ R} $. Moreover,  using \eqref{transport} again to evaluate $\V(R T(x))$ we find for any $x$ such that $RT(x)\neq N$, i.e. $x\neq -d/c$, 
\begin{align*}
&V_\varphi(x)  = V(T^{-1}RT(x)) - \log(1+|T^{-1}RT(x)|^2)  + \log(1+|x|^2),\\
& V_\varphi(-d/c) = \V(N) + \log(1+|d/c|^2)=\log(1+|d/c|^2) +\liminf_{|x|\to +\infty} \{V(x) - \log(1+|x|^2)\}.
\end{align*}
Finally we find that 
\begin{equation}\label{vp} V_\varphi(x)  = V(\varphi(x)) - \log(1+|\varphi(x)|^2) + \log(1+|x|^2), \quad V_\varphi(-d/c) = \liminf_{y\to-d/c} V_\varphi(y).
\end{equation}

Now we rewrite the discrete energy by changing variables, to find that, writing $w_{n,V}$ instead of $w_n$ to clarify the dependence on $V$, 
\begin{equation}\label{disc1}w_{n,V}(x_1,\dots,x_n) = -\sum_{i\neq j}^n\log|\varphi(y_i) - \varphi(y_j)|+ n\sum_{i=1}^nV(\varphi(y_i)),\end{equation}
where $x_i = \varphi(y_i)$. Now we use the identity (see \cite{Electsphere}, \cite{Hardy})
$$\|T(x)-T(y)\|=\frac{|x-y|}{\sqrt{1+|x|^2}\sqrt{1+|y|^2}}$$
applied to $\varphi( x)$, $\varphi(y)$ together with the fact that  $\varphi = T^{-1} R T$ and that $R$ is a rotation to get
$$\|T(x)-T(y)\|=\frac{|\varphi(x)-\varphi(y)|}{\sqrt{1+|\varphi(x)|^2}\sqrt{1+|\varphi(y)|^2}}.$$
The two together imply that 
$$\log|\varphi(x) - \varphi(y)| = \log|x-y| + \frac12 \log (1+|\varphi(x)|^2) + \frac12 \log (1+|\varphi(y)|^2) - \frac12 \log (1+|x|^2)- \frac12 \log (1+|y|^2).$$
Replacing in \eqref{disc1} shows that 
\begin{equation}\label{chvar}w_{n,V}(x_1,\dots, x_n) = w_{n,V_\varphi}(y_1,\dots, y_n) + \sum_i \log (1+|\varphi(y_i)|^2)  - \sum_i \log (1+|y_i|^2),\quad x_i = \varphi(y_i).\end{equation}

It follows from \eqref{chvar} that an upper bound  for $\min w_{n,V}$ can be computed by using a minimizer for  $w_{n,V_\varphi}$ as a test function. But now we recall that $\mu_{V_\varphi}$ has compact support, hence the results of \cite{2DSandier} apply and we find, using the fact that for such a minimizer $\frac1n\sum_i\delta_{y_i}$ converges to $\mu_{V_\varphi}$,
\begin{equation}\label{mm}\min w_{n,V} \le  n^2 I_{V_\varphi}(\mu_{V_\varphi}) - \hal n \log n + \left(\alpha_{V_\varphi}  + \int \log \left(\frac{1+|\varphi(x)|^2}{1+|x|^2}\right)\,d\mu_{V_\varphi}(x)\right)n + o(n), \end{equation} 
where 
$$\alpha_{V_\varphi} = \frac{\alpha_1}{\pi}-\hal \int_{\Sigma_{V_\varphi}} m_{V_\varphi}(x)\log  m_{V_\varphi}(x)\, dx, \quad  \alpha_1:=\min_{\mathcal A_1} W.$$
We remark that $I_{V_\varphi}(\mu_{V_\varphi})=I_V(\mu_V)$ because $\mu_{V_\varphi}=\varphi^{-1}\# \mu_V$. Moreover, it follows from \eqref{mv} that 
$$m_{V_\varphi}(x) = m_V(\varphi(x))\left(\frac{1+|\varphi(x)|^2}{1+|x|^2}\right)^2,$$
which plugged in the expression for $\alpha_{V_\varphi}$ and then in \eqref{mm} yields, 
$$\min w_{n,V} \le n^2 I_V(\mu_V) - \hal n \log n + \alpha_{V} n +o(n), $$
which matches the lower-bound we already obtained and thus proves Theorem~\ref{thintro} in the case where the support of $\mu_V$ is not the full plane.

\subsection{Proof of the upper bound by compactification and conclusion}
Here we assume that $\Sigma_V=\R^2$.  Let
$$
\varphi(z):=-\frac{1}{z}=\varphi^{-1}(z).
$$
Then, using the notations of the previous section, we deduce from \eqref{vp} that 
$$
V_\varphi(z)=V(\varphi(z))+2\log|z|.
$$
To simplify exposition and notation, we assume that $\mu_V(B_1)=\mu_V(B_1^c)=1/2$, otherwise  there would exist $R$ such that $\mu_V(B_R)=\mu_V(B_R^c)=1/2$ and we should use the transformation $\varphi_R(z)=\varphi_R^{-1}(z)=-Rz^{-1}$ instead.

Our idea is to cut $\Sigma_V=\R^2$ into two parts in order to construct a sequence of $2n$ points associated to a sequence of vector-fields. We will only construct test configurations with an even number of points, again to simplify exposition and avoid unessential technicalities.\medskip

\noindent \textbf{Step 1: Reminder  of the compact case and notations.}
We reproduce below \cite[Corollary 4.6]{2DSandier} when $K$ is a compact set of $\R^2$. Note that we have replaced there the hypothesis of $T_{\lambda(x)}$-invariance (which is part of the definition of an admissible $P$) by the usual translation invariance. We give in the appendix a direct proof that the two notions are in fact equivalent, it would follow also  from the fact that the translation invariance implies that the disintingration measures are themselves invariant (see \cite[Remark~2.4]{leble}).

\begin{thm} \label{UpBound} \textnormal{(\cite{2DSandier})} Let $P$ be a $T$-invariant probability measure on $X=K\times L_{loc}^p(\R^2,\R^2)$, where $K$ is a compact subset of $\R^2$ with $C^1$ boundary. 

We assume  that $P$ has  first marginal $dx_{|K}/|K|$ and  that  for $P$-almost every $(x,E)$ we have $E\in\mathcal{A}_{m(x)}$, where $m$ is a smooth function on $K$ bounded above and below by positive constants. Then there exists a sequence $\{\nu_n=\sum_{i=1}^n\delta_{x_i}\}_n$ of empirical measures on $K$ and a sequence $\{E_n\}_n$ in $L_{loc}^p(\R^2,\R^2)$ such that $\divergence E_n=2\pi(\nu'_n-m')$, such that $E_n = 0$ outside $K$ and such that $\displaystyle P_{n}:=\dashint_{K}\delta_{(x,E_n(\sqrt{n}x+.))}dx  \to P$ as $n\to +\infty$. Moreover 
\begin{equation*}
\limsup_{n\to +\infty}      \frac{1}{n\pi} W(E_n,\indic_{\R^2})\leq \frac{|K|}{\pi}\int W(E)dP(x,E).\\ 
\end{equation*}
\end{thm}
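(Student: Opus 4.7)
My plan is to prove Theorem~\ref{UpBound} by a tiling-and-screening construction following the Gamma-convergence strategy of \cite{Sandier_Serfaty,2DSandier}: explicitly build $E_n$ and $\nu_n$ by patching translates of periodic configurations sampled according to $P$, then verify $P_n\to P$ and the energy bound.

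First I would approximate $P$ weakly by a measure $P_\epsilon$ supported on finitely many pairs $(x_j,E^{(j)})$ where each $E^{(j)}$ is a periodic vector field of density $m(x_j)$; this uses density of periodic configurations in $\mathcal{A}_m$ with $W$-continuity together with the $T_{\lambda(x)}$-invariance of $P$, so that the arbitrary translate within a period cell is immaterial. Then I would tile the blown-up set $\sqrt{n}K$ by cubes $Q_i$ of side length $R$ (large but fixed): to each $Q_i$ centered at $z_i$ I associate $x_i=z_i/\sqrt{n}\in K$ and place on $Q_i$ a translated copy of a periodic configuration drawn from the conditional law $P_\epsilon^{x_i}$. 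Neighbouring cubes are then glued by a screening construction: in a thin layer of fixed width near each $\partial Q_i$, modify the local vector field to match a common reference divergence profile by solving an auxiliary Neumann problem, producing an $L^2$ correction of order $R$ per cube, hence $o(R^2)$ as $R\to\infty$ compared to the bulk $R^2 W(E^{(i)})$. A similar boundary screening in an $O(1)$-width layer just inside $\partial(\sqrt{n}K)$ forces $E_n$ to vanish outside $\sqrt{n}K$ and adjusts the total number of points to exactly $n$; since $\partial K$ is $C^1$ and $m$ is bounded above and below by positive constants, this boundary contribution is $o(n)$.

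Summing cube contributions gives
\[\frac{1}{n\pi}W(E_n,\indic_{\R^2})\approx \frac{1}{n\pi}\sum_i R^2 W(E^{(i)})\approx \frac{|K|}{\pi}\int W(E)\,dP_\epsilon(x,E),\]
and a diagonal argument with $R\to\infty$ and $\epsilon\to 0$ yields both $P_n\to P$ and the claimed upper bound. The convergence $P_n\to P$ is built into the construction, because sampling $x$ uniformly in $K$ and reading off the translate of the periodic pattern living on the cube containing $\sqrt{n}x$ reproduces the conditional law $P_\epsilon^x$ up to negligible boundary terms, while the $T_{\lambda(x)}$-invariance of $P$ exactly absorbs the arbitrary intra-cube translations. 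The main obstacle, in my view, is the screening step: constructing an inter-cube correction so that simultaneously (i) $\div E_n=2\pi(\nu_n'-m')$ holds exactly, (ii) $\curl E_n=0$ globally, and (iii) the $L^2$-norm of the correction in each layer is $o(R)$. This is essentially the screening lemma of \cite{Sandier_Serfaty}; its verification, together with the variable-density bookkeeping required because $m$ and $E^{(j)}$ both depend on $x$, is the technical heart of the argument.
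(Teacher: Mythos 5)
You should first be aware that the paper does not prove Theorem~\ref{UpBound} at all: it is quoted as Corollary~4.6 of \cite{2DSandier} and used as a black box, so there is no internal proof to compare yours against; what you are sketching is a proof of the cited result of Sandier and Serfaty. Your overall architecture --- tiling the blown-up set $\sqrt{n}K$ by large cubes, screening in thin layers to glue the pieces and to make $E_n$ vanish outside $K$, adjusting the point count near $\partial(\sqrt{n}K)$, and recovering $P_n\to P$ from spatial averaging combined with the $T_{\lambda(x)}$-invariance --- is indeed the architecture of the construction in \cite{2DSandier,Sandier_Serfaty}.

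The genuine gap is your first step: ``density of periodic configurations in $\mathcal{A}_m$ with $W$-continuity'' is not an available fact. $W$ is not continuous for the $L^p_{loc}$ topology (it is merely a measurable, very unstable quantity; what is known is that its \emph{minimum} over $\mathcal{A}_1$ is the limit of minima over periodic configurations), and approximating a general $E$ while controlling both $W$ and the induced translation-averaged point process is essentially as hard as the screening construction itself. Moreover, if you replace $P$ by a measure $P_\epsilon$ carried by finitely many periodic fields, your construction produces $P_n\to P_\epsilon$, whereas the statement demands $P_n\to P$ exactly; rescuing this by a diagonal argument requires a quantitative approximation lemma ($P_\epsilon\to P$ weakly with $\limsup_\epsilon\int W\,dP_\epsilon\le\int W\,dP$, $P_\epsilon$ invariant and carried by periodic fields), which you have not justified and which is not easier than the theorem. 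The proof in \cite{2DSandier} avoids this reduction: it partitions $K$ into small squares on which $m$ is nearly constant and applies the screening proposition directly to vector fields sampled from $P$ itself, restricted to large blown-up squares, with good representatives selected by ergodic/Fubini-type averaging; this is precisely what makes $P_n\to P$ come out on the nose and dispenses with any density of periodic fields. Two smaller points: the statement does not require $\curl E_n=0$ (only the divergence constraint and $E_n=0$ outside $K$), so your obstacle (ii) need not, and should not, be imposed when gluing; and since $E_n=0$ outside the compact set $K$ forces $\int_{\R^2}\div E_n=0$, the exact matching of the number of points with the mass of $m'$ is not a side remark but a compatibility condition that your boundary-layer correction must be shown to achieve.
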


We write $\mu_V=\mu_V^{(1)}+\mu_V^{(2)}$ where  $\mu_V^{(1)}:={\mu_V}_{|B_1}$ and $\mu_V^{(2)}:={\mu_V}_{|\overline B_1^c}$, where $\overline A $ denotes the closure of set $A$ in $\R^2$. Let $\tilde{\mu}_V^{(2)}:=\varphi\#\mu_V^{(2)}$, then we have 
$$
d\mu_V^{(1)}(x)=m_V(x)\indic_{B_1}(x)dx=:m_V^{(1)}(x)dx \quad \text{ and } \quad d\tilde{\mu}_V^{(2)}(x)=m_{V_\varphi}(x)\indic_{B_1}(x)dx=:m_{V_\varphi}^{(2)}(x)dx,
$$
where $m_{V_\varphi}(x)=m_V(\varphi^{-1}(x))| \det(D\varphi_x^{-1})|$.\\
Note that, by assumption \textbf{(H2)} and \eqref{mv} we have that there exists positive constants $\bm$ and $\mb$ such that, for any $x\in B_1$, 
$$
0<\mb\leq m_V(x)\leq \bm\quad \text{ and } \quad 0<\mb\leq m_{V_\varphi}(x)\leq \bm.
$$
Moreover the  boundary $\partial B_1$ is $C^1$. 

Now let   $P$ be a $T$-invariant probability measure on $X$ whose first marginal is $\mu_V$ and be such that for $P$-almost very $(x,E)$, we have $E\in \mathcal{A}_{m_V(x)}$. We can write 
$$P=P^{(1)}+P^{(2)},
$$ 
where $P^{(1)}$ is the restriction of $P$ to $B_1\times L^p_{loc}(\R^2,\R^2)$ with first marginal $\mu_V^{(1)}$, and $P^{(2)}$ is the restriction of $P$ to $B_1^c\times L^p_{loc}(\R^2,\R^2)$ with first marginal $\mu_V^{(2)}$. We define $\tilde P^{(1)}$ by the relation
$$
dP^{(1)}(x,u)=m_V(x)|B_1|d\tilde{P}^{(1)}(x,u),
$$
and then $\tilde{P}^{(1)}$ is a $T$-invariant probability measure on $B_1\times L^p_{loc}(\R^2,\R^2)$ with first marginal $dx_{|B_1}/|B_1|$ and such that, for $\tilde{P}^{(1)}$-a.e. $(x,E)$, $E\in \mathcal{A}_{m_V^{(1)}(x)}$. We denote by $\varphi\# P^{(2)}$ the pushforward of $P^{(2)}$ by 
\begin{equation}\label{chgtvar}
(x,E)\mapsto \left(y, \tilde{E}\right),\ \text{ where } \ y:=\varphi(x) \text{ and } \tilde{E}:=(D\varphi_y)^T E(D\varphi_y \cdot),
\end{equation}
where $D\varphi_x$ is the differential of $\varphi$ at point $x$. Then if $\div E   = 2\pi(\nu - m_V(x)dx)$ we have  $\div \tilde{E} = 2\pi(\vp\#\nu -|\partial_z\varphi(y)|^2 m_V(\varphi(y)))$ so that for $\varphi\# P^{(2)}$-a.e. $(y,\tilde{E})$ the vector field $\tilde{E}$ belongs to  $ \mathcal{A}_{m_{V_\varphi}(y)}$, since 
$$m_{V_\varphi}(y)\,dy = m_V(\varphi(y)) \,d(\varphi(y)) = m_V(\varphi(y))|\partial_z\varphi(y)|^2\,dy.$$
We define $\tilde P^{(2)}$ by the relation
$$
d(\varphi\# P^{(2)})(y,\tilde{E})=m_{V_\varphi}(y)|B_1|d\tilde{P}^{(2)}(y,\tilde{E}),
$$
and then $\tilde{P}^{(2)}$ is a $T$-invariant probability measure on $B_1\times L^p_{loc}(\R^2,\R^2)$ with first marginal $dy_{|B_1}/|B_1|$ and such that, for $\tilde{P}^{(2)}$ a.e. $(y,\tilde{E})$, $\tilde{E}\in \mathcal{A}_{m_{V_\varphi}(y)}$.\medskip

\noindent \textbf{Step 2: Application of Theorem \ref{UpBound}.} We may now  apply Theorem \ref{UpBound} to $\tilde{P}^{(1)}$ and $\tilde{P}^{(2)}$.  We thus construct a sequence $\{\nu_n^{(1)}:=\sum_{i=1}^n\delta_{x_i^{(1)}}\}$ of empirical measures on $B_1$ and a sequence $\{E_n^{(1)}\}_n$ in $L^p_{loc}(B_1,\R^2)$ such that 
$$
\divergence E_n^{(1)}=2\pi((\nu_n^{(1)})'-(m_V^{(1)})')\quad \text{ and } \quad \tilde{P}_n^1:=\dashint_{B_1}\delta_{(x,E_n^{(1)}(\sqrt{n}x+.))}dx  \to \tilde{P}^{(1)},
$$
as $n\to +\infty$. Moreover, we have
\begin{equation}\label{F1}
\limsup_{n\to +\infty} \frac{1}{n\pi} W(E_n^{(1)},\indic_{\R^2})\leq \frac{|B_1|}{\pi}\int W(E)d\tilde{P}^{(1)}(x,E).
\end{equation}
Applying now the same Theorem to $\tilde{P}^{(2)}$, we construct a sequence $\{\tilde{\nu}_n^{(2)}:=\sum_{i=1}^n\delta_{\tilde{x}_i^{(2)}}\}$ of empirical measures on $B_1$ and a sequence $\{\tilde{E}_n^{(2)}\}_n$ in $L^p_{loc}(B_1,\R^2)$ such that 
$$
\divergence \tilde{E}_n^{(2)}=2\pi((\tilde{\nu}_n^{(2)})'-(m_{V_\varphi}^{(2)})') \quad \text{ and } \tilde{P}_n^{(2)}:=\dashint_{B_1}\delta_{(x,\tilde{E}_n^{(2)}(\sqrt{n}x+.))}dx  \to \tilde{P}^{(2)},
$$
as $n\to +\infty$. Moreover, we have
\begin{equation*}
\limsup_{n\to +\infty} \frac{1}{n\pi} W(\tilde E_n^{(2)},\indic_{\R^2})\leq \frac{|B_1|}{\pi}\int W(\tilde{E})d\tilde{P}^{(2)}(y,\tilde{E}).
\end{equation*}\medskip

\noindent \textbf{Step 3: Construction of sequences and conclusion.}  It is not difficult to see that we can assume $\tilde{x}_j^{(2)}\neq 0$ for any $j$ and any $n\geq 2$ (otherwise we translate the point a little bit). Now we set $x_j^{(2)} := \vp(\tilde x_j^{(2)})$ and in view of \eqref{chgtvar}, for each $n$ we define 
$$
\nu_n^{(2)}:=\varphi\# \tilde{\nu}_n^{(2)}=\sum_{j=1}^n \delta_{x_j^{(2)}} \quad \text{ and } \quad E_n^{(2)}(x):=(D\varphi_{n^{-1/2}x})^T \tilde{E}_n^{(2)}(n^{1/2}\varphi(n^{-1/2}x)).
$$
Hence, we have a sequence of vector-fields $E_n^{(2)}$ of $L^p_{loc}(\R^2,\R^2)$ such that
$$
\divergence E_n^{(2)}=2\pi((\nu_n^{(2)})'-(m_V^{(2)})')
$$
where $m_V^{(2)}(x)=m_V(x)\indic_{\bar{B_1^c}}(x)$ is the density of $\mu_V^{(2)}$. 

Below we will use the notation $\left|\partial_z\varphi(z)\right|$ for the modulus of the complex derivative of $\varphi$ at the point $z$.

We have, for  every $i$,
\begin{align*}
&W(E_n^{(2)},\indic_{\R^2}) \\
&=\lim_{\eta\to 0} \left(\frac{1}{2}\int_{\R^2\backslash \bigcup_{i=1}^n B(x_i^{(2)},\eta)} |E_n^{(2)}(x')|^2dx' +\pi n\log\eta   \right)\\
&=\lim_{\eta\to 0} \left(\frac{1}{2}\int_{\R^2\backslash \bigcup_{i=1}^n B(x_i^{(2)},\eta)} |(D\varphi_{n^{-1/2}x'})^T \tilde{E}_n^{(2)}(n^{1/2}\varphi(n^{-1/2}x'))|^2dx' +\pi n\log\eta   \right)\\
&=\lim_{\eta\to 0} \left(\frac{1}{2}\int_{\R^2\backslash \bigcup_{i=1}^n B(y_i^{(2)},|{\partial_z\varphi}(x_i^{(2)})|\eta)} |\tilde{E}_n^{(2)}(y')|^2dy' +\pi n\log\eta   \right)\\
&=\lim_{\eta\to 0} \left(\frac{1}{2}\int_{\R^2\backslash \bigcup_{i=1}^n B(y_i^{(2)},|{\partial_z\varphi}(x_i^{(2)})|\eta)} |\tilde{E}_n^{(2)}(y')|^2dy' +\pi\sum_{i=1}^n \log|{\partial_z\varphi}(x_i)|\eta  -\pi \sum_{i=1}^n \log |{\partial_z\varphi}(x_i)| \right)\\
&=W(\tilde{E}_n^{(2)},\indic_{\R^2})-\pi \sum_{i=1}^n \log |{\partial_z\varphi}(x_i)|,
\end{align*}
where the change of variable is $y'=n^{1/2}\varphi(n^{-1/2}x')$.\\
Furthermore, we have
$$
\int W(\tilde{E})d \tilde{P}^{(2)}(y,\tilde{E})=\frac{1}{|B_1|}\int W(\tilde{E})\frac{d(\varphi \# P^{(2)})(y,\tilde{E})}{m_{V_\varphi}(y)}=\frac{1}{|B_1|}\int W\left(D\varphi_y^T E(D\varphi_y .)  \right)\frac{dP^{(2)}(x,E)}{m_{V_\varphi}(y)}
$$
by change of variable $y=\varphi(x)$ and $\tilde{E}=D\varphi_y^T E(D\varphi_y \cdot)$.\\
Now we remark that, for $\lambda>0$ and $E\in \mathcal{A}_m$,
\begin{align*}
W(\lambda E(\lambda .)) &=\lim_{R\to +\infty}\frac{1}{\pi R^2} \lim_{\eta \to 0} \left(\frac{1}{2}\int_{\R^2\backslash \bigcup_{i} B(y_i,\eta)} \chi_R(y) \lambda^2 |E(\lambda y)|^2 dy+\pi \sum_{i}\chi_R(y_i)\log\eta\right)\\
&=\lim_{R\to +\infty}\frac{1}{\pi R^2} \lim_{\eta \to 0} \left(\frac{1}{2}\int_{\R^2\backslash \bigcup_{i} B(x_i,\lambda\eta)} \chi_R(x/\lambda)|E(x)|^2dx +\pi \sum_i \chi_R(x_i/\lambda)\log\eta\right)
\end{align*}
where $x=\lambda y$. Thus, setting $R'=R\lambda$ and $\eta' =\eta \lambda$, we get
\begin{align*}
W(\lambda E(\lambda .)) 
&=\lim_{R'\to +\infty}\frac{\lambda^2}{\pi R'^2} \lim_{\eta' \to 0} \left(\frac{1}{2}\int_{\R^2\backslash \bigcup_{i} B(x_i,\eta')} \chi_{R'}(x)|E(x)|^2dx +\pi \sum_i \chi_{R'}(x_i)\left(\log\eta'-\log\eta \right)\right)\\
&=\lambda^2\left(W(E)-m\log\lambda   \right).
\end{align*}
Applying this equality with $\lambda=|{\partial_z\varphi}(x)|^{-1}=|{\partial_z\varphi^{-1}}(y)|$, we obtain
\begin{align*}
&\limsup_{n\to +\infty} \frac{1}{n\pi}\left(W(E_n^{(2)},\indic_{\R^2}) +\pi\sum_{i=1}^n\log|{\partial_z\varphi}(x_i)| \right)\\
&\leq \frac{1}{\pi}\int \frac{1}{|{\partial_z\varphi}(x)|^2}\left( W(E)+\log|{\partial_z\varphi}(x)|m_V^{(2)}(x) \right)\frac{d P^{(2)}(x,E)}{m_{V_\varphi}(y)},
\end{align*}
that is to say, because $m_V^{(2)}$ is the density of points $\{x_i\}$ as $n\to +\infty$,
\begin{align*}
&\limsup_{n\to +\infty} \frac{1}{n\pi}W(E_n^{(2)},\indic_{\R^2})+\int_{B_1^c} \log|{\partial_z\varphi}(x)|d\mu_V^{(2)}(x)\\
&\leq \frac{1}{\pi}\int W(E) \frac{d P^{(2)}(x,E)}{m_V(x)}+\int_{B_1^c} \log|{\partial_z\varphi} (x)|dP^{(2)}(x).
\end{align*}
As $\int_{B_1^c} \log|{\partial_z\varphi}(x)|d P^{(2)}(x)=\int_{B_1^c} \log|{\partial_z\varphi}(x)|d\mu_V^{(2)}(x)$, it follows that
\begin{equation}\label{F2}
\limsup_{n\to +\infty} \frac{1}{n\pi}W(E_n^{(2)},\indic_{\R^2})\leq \frac{1}{\pi}\int W(E) \frac{d P^{(2)}(x,E)}{m_V(x)}.
\end{equation}
Finally, we set
$$
\nu_{2n}:=\nu_n^{(1)}+\nu_n^{(2)} \quad \text{ and } \quad E_{2n}:=E_n^{(1)}+E_n^{(2)},
$$
and by \eqref{F1} and \eqref{F2}, we have, since $E_n^{(1)}$ and $E_n^{(2)}$ have disjoint supports, 
\begin{align*}
\limsup_{n\to +\infty} \frac{1}{n\pi} W(E_n,\indic_{\R^2})&\leq \frac{1}{\pi}\int \frac{W(E)}{m_V(x)}dP^{(1)}(x,E)+\frac{1}{\pi}\int \frac{W(E)}{m_V(x)}dP^{(2)}(x,E)\\
&=\frac{1}{\pi}\int \frac{W(E)}{m_V(x)}dP(x,E)
\end{align*}
which proves \eqref{UB}. Furthermore, by changes of variable, 
$$
P_n^{(1)}:=\int_{B_1}\delta_{(x,E_n^{(1)}(x\sqrt{n}+.))}d\mu_V(x)\to P^{(1)} \quad \text{and} \quad P_n^{(2)}:=\int_{B_1^c}\delta_{(x,E_n^{(2)}(x\sqrt{n}+.))}d\mu_V(x)\to P^{(2)}
$$
in the weak sense of measure, and it follows that
$$
P_n=P_n^{(1)}+P_n^{(2)}\to P^{(1)}+P^{(2)}= P.\\
$$
Part \textbf{C} follows from \textbf{A} and \textbf{B} as in \cite{2DSandier}.

\section{Consequence: the Logarithmic Energy on the Sphere}\label{CTLEOTS}
The asymptotic expansion of the minimum of the Hamiltonian $w_n$ in the case of weakly confining potential that we have --- where the minimizing points are allowed to fill the whole plane instead of being confined to a fixed compact set as in the classical case --- allows through the use of the inverse stereographic projection  (as in \cite{Electsphere}, \cite{Dragnev:2002}, \cite{Hardy})   to determine the asymptotic expansion of the optimal logarithmic energy on sphere.

\subsection{Inverse stereographic projection}
Here we recall properties of the inverse stereographic projection used by Hardy and Kuijlaars \cite{Hardy,Hardy2} and by Bloom, Levenberg and Wielonsky \cite{Bloom:2014qy}  in order to prove Theorem \ref{equi}.\\
Let $\mathcal{S}$ be the sphere of $\R^3$ centred at $(0,0,1/2)$ of radius $1/2$, $\Sigma$ be an unbounded closed set of $\R^2$ and $T:\R^2\to \mathcal{S}$ be the associated inverse stereographic projection defined by
\begin{equation*}
T(x_1,x_2)=\left(\frac{x_1}{1+|x|^2}, \frac{x_2}{1+|x|^2},\frac{|x|^2}{1+|x|^2} \right), \quad \text{for any $x=(x_1,x_2)\in\R^2$},
\end{equation*}
where   $\R^2:=\{(x_1,x_2,0);x_1,x_2\in\R\}$. We know that $T$ is a conformal homeomorphism from $\C$ to $\mathcal{S}\backslash \{N\}$ where $N :=(0,0,1)$ is the North pole of $\mathcal{S}$. \\

\noindent We have the following identity:
\begin{equation*}\label{distanceT}
\|T(x)-T(y)\|=\frac{|x-y|}{\sqrt{1+|x|^2}\sqrt{1+|y|^2}}, \text{ for any } x,y\in \R^2.
\end{equation*}
Furthermore, if $|y|\to +\infty$, we obtain, for any $x\in\R^2$:
\begin{equation} \label{interinf} \|T(x)-N\|=\frac{1}{\sqrt{1+|x|^2}}.
\end{equation}

\noindent We note $\Sigma_{\mathcal{S}}=T(\Sigma)\cup \{N\}$ the closure of $T(\Sigma)$ in $\mathcal{S}$. Let $\mathcal{M}_1(\Sigma)$ be the set of probability measures on $\Sigma$. For $\mu \in  \mathcal{M}_1(\Sigma)$, we denote by $T\#\mu$ its push-forward measure by $T$ characterized by
\begin{equation*}
\int_{\Sigma_{\mathcal{S}}}f(z)dT\#\mu(z)=\int_\Sigma f(T(x))d\mu(x),
\end{equation*}
for every Borel function $f:\Sigma_{\mathcal{S}}\to \R$. The following result is proved in \cite{Hardy}:
\begin{lemma} \label{THomeo} The correspondance  $\mu\to T\#\mu$ is a homeomorphism from the space $\mathcal{M}_1(\Sigma)$ to the set of $ \mu\in \mathcal{M}_1(\Sigma_S)$ such that $ \mu(\{N\})=0$.
\end{lemma}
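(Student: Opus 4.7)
The plan is to separate the lemma into bijectivity and continuity in both directions with respect to the weak topology on probability measures. For bijectivity, I would note that $T$ restricts to a homeomorphism between $\Sigma$ and $\Sigma_\ms\setminus\{N\}$, so the pushforward $T\sharp$ is a bijection from Borel probability measures on $\Sigma$ onto Borel probability measures on $\Sigma_\ms$ which are supported on $\Sigma_\ms\setminus\{N\}$, i.e.\ those measures $\mu$ satisfying $\mu(\{N\})=0$.

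For continuity of $T\sharp$, suppose $\mu_n\to\mu$ weakly in $\mathcal{M}_1(\Sigma)$. Any $f\in C(\Sigma_\ms)$ is bounded since $\Sigma_\ms$ is compact, and $f\circ T$ is therefore a bounded continuous function on $\Sigma$. Hence $\int f\,d(T\sharp\mu_n)=\int (f\circ T)\,d\mu_n$ converges to $\int (f\circ T)\,d\mu=\int f\,d(T\sharp\mu)$, which is precisely weak convergence $T\sharp\mu_n\to T\sharp\mu$.

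For continuity of the inverse, assume $\nu_n=T\sharp\mu_n\to\nu=T\sharp\mu$ weakly in $\mathcal{M}_1(\Sigma_\ms)$ with $\nu(\{N\})=0$. The critical step is tightness of the sequence $(\mu_n)$ on $\Sigma$: using identity \eqref{interinf}, the image $T(\{x\in\Sigma:|x|\ge R\})$ is contained in the closed ball of $\ms$ of radius $\delta(R)=1/\sqrt{1+R^2}$ centered at $N$, call it $\bar B_{\delta(R)}(N)$, so the portmanteau theorem applied to closed sets gives
$$\limsup_{n\to+\infty}\mu_n(\{|x|\ge R\})\le\limsup_{n\to+\infty}\nu_n(\bar B_{\delta(R)}(N))\le \nu(\bar B_{\delta(R)}(N)),$$
and the right-hand side tends to $\nu(\{N\})=0$ as $R\to+\infty$. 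Given any $g\in C_b(\Sigma)$, I would then pick a continuous cutoff $\chi_R:\ms\to[0,1]$ that vanishes in a small neighborhood of $N$ and equals $1$ outside $\bar B_{\delta(R)}(N)$, so that the product $\chi_R\cdot(g\circ T^{-1})$ extends continuously to all of $\ms$ by zero at $N$. Weak convergence of $\nu_n$ then yields convergence of $\int\chi_R(g\circ T^{-1})\,d\nu_n$ to $\int\chi_R(g\circ T^{-1})\,d\nu$, while the differences between these and $\int g\,d\mu_n$, $\int g\,d\mu$ respectively are bounded by $\|g\|_\infty$ times the $\nu_n$- (resp.\ $\nu$-) mass of a neighborhood of $N$, which is uniformly small for $R$ large thanks to the tightness just obtained.

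The main obstacle is this tightness-plus-cutoff step: since $\Sigma$ is non-compact, weak convergence on $\Sigma_\ms$ alone is not enough to test against arbitrary $g\in C_b(\Sigma)$, and the hypothesis $\nu(\{N\})=0$ has to be exploited, through the portmanteau theorem applied to shrinking closed neighborhoods of $N$, precisely to prevent mass from escaping to infinity in $\Sigma$.
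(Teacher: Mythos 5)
Your proof is correct. Note that the paper does not prove this lemma at all — it simply cites \cite{Hardy} — so there is no in-paper argument to compare against; your write-up supplies exactly the standard proof one finds in that reference: bijectivity from $T$ being a homeomorphism of $\Sigma$ onto $\Sigma_{\mathcal{S}}\setminus\{N\}$, direct continuity by composition with bounded continuous test functions, and continuity of the inverse by exploiting $\nu(\{N\})=0$ through the portmanteau inequality on the shrinking closed balls $\bar B_{\delta(R)}(N)$ given by \eqref{interinf}, which is precisely the point where mass could otherwise escape to infinity in $\Sigma$. The only cosmetic remark is that your argument is sequential, which suffices because the weak topology on probability measures over a Polish space is metrizable.
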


\subsection{Asymptotic expansion of the optimal logarithmic energy on the unit sphere}
An important case is the equilibrium measure associated to the potential 
$$
V(x)=\log(1+|x|^2)
$$
 corresponding to the external field $\mathcal{V}\equiv 0$ on $\mathcal{S}$ and where $T\#\mu_V$ is the uniform probability measure on $\mathcal{S}$ (see \cite{Hardy}). Hence $V$ is an admissible potential and from \eqref{mv} we find 
$$
\displaystyle d\mu_V(x)=\frac{dx}{\pi(1+|x|^2)^2}\quad \text{and} \quad \Sigma_V=\R^2.
$$

We define
$$
 \overline{w}_n(x_1,...,x_n):=-\sum_{i\neq j}^n\log|x_i-x_j|+(n-1)\sum_{i=1}^n\log(1+|x_i|^2),
 $$
and we recall that the logarithmic energy of a configuration $(y_1,...,y_n)\in \mathcal{S}^n$ is given by
$$
\textnormal{E}_{\log}(y_1,....,y_n):=-\sum_{i\neq j}^n\log\|y_i-y_j\|.
$$
Furthermore, we recall that $\mathcal{E}_{\log}(n)$ denotes the minimal logarithmic energy of $n$ points on $\S^2$.
\begin{lemma} \label{wminimizer}
For any $(x_1,...,x_n)\in (\R^2)^n$, we have the following equalities:
\begin{equation*}
\overline{w}_n(x_1,...,x_n)=\textnormal{E}_{\log}(T(x_1),...,T(x_n)) \quad \text{ and } \quad w_n(x_1,...,x_n)=\textnormal{E}_{\log}(T(x_1),...,T(x_n), N),
\end{equation*}
which imply that
\begin{align*}
&(x_1,...,x_n) \text{ minimizes } \overline{w}_n \iff (T(x_1),...,T(x_n)) \text{ minimizes } \textnormal{E}_{\log}\\
&(x_1,...,x_n) \text{ minimizes } w_n \iff (T(x_1),...,T(x_n),N) \text{ minimizes } \textnormal{E}_{\log}.
\end{align*}
\end{lemma}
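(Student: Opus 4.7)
The plan is to derive both equalities directly from the two distance formulas for the stereographic projection already recalled in the excerpt,
\begin{equation*}
\|T(x)-T(y)\|=\frac{|x-y|}{\sqrt{1+|x|^2}\sqrt{1+|y|^2}}, \qquad \|T(x)-N\|=\frac{1}{\sqrt{1+|x|^2}}.
\end{equation*}
Taking $-\log$ of the first identity splits the cross-term off from two separable one-variable contributions, and $-\log$ of the second produces a single one-variable term; the two equalities follow from summing these relations over the appropriate sets of ordered pairs.

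For the first equality, I would apply $-\log$ to the first formula to obtain
\begin{equation*}
-\log\|T(x_i)-T(x_j)\| = -\log|x_i-x_j| + \tfrac{1}{2}\log(1+|x_i|^2) + \tfrac{1}{2}\log(1+|x_j|^2),
\end{equation*}
and sum over ordered pairs $i\neq j$. For each fixed $k\in\{1,\dots,n\}$, the term $\frac{1}{2}\log(1+|x_k|^2)$ appears $n-1$ times as an $i$-contribution and $n-1$ times as a $j$-contribution, for a total coefficient $n-1$. This reproduces exactly the definition of $\overline w_n(x_1,\dots,x_n)$.

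For the second equality, I would regard $(T(x_1),\dots,T(x_n),N)$ as a configuration of $n+1$ points on $\S^2$ and split the double sum defining $E_{\log}$ into two parts: the $n(n-1)$ ordered pairs with both indices in $\{1,\dots,n\}$, which by the first equality already contribute $\overline w_n(x_1,\dots,x_n)$, and the $2n$ ordered pairs involving the extra point $N$, which by the second distance identity contribute
\begin{equation*}
-2\sum_{i=1}^n\log\|T(x_i)-N\| = \sum_{i=1}^n\log(1+|x_i|^2).
\end{equation*}
Adding these pushes the prefactor in front of $\sum_i\log(1+|x_i|^2)$ from $n-1$ up to $n$, giving precisely $w_n(x_1,\dots,x_n)$ since $V(x)=\log(1+|x|^2)$.

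The equivalences for minimizers then follow from these two identities together with the fact that $T$ is a bijection from $\R^2$ onto $\S^2\setminus\{N\}$ and that $E_{\log}$ is invariant under rotations of $\S^2$: any minimizer of $E_{\log}$ on $(\S^2)^n$ can be rotated off of $N$, and any minimizer on $(\S^2)^{n+1}$ can be rotated so that one of its coordinates equals $N$, without affecting the value of $E_{\log}$. I expect no genuine obstacle here; the computation is a bookkeeping exercise, and the only mild care required is the invocation of rotational invariance to handle the edge cases where a minimizing configuration on the sphere meets $N$.
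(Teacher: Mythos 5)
Your proof is correct and follows essentially the same route as the paper: both derive the two identities by taking logarithms of the stereographic distance formulas $\|T(x)-T(y)\|=|x-y|/\bigl(\sqrt{1+|x|^2}\sqrt{1+|y|^2}\bigr)$ and $\|T(x)-N\|=(1+|x|^2)^{-1/2}$ and summing over ordered pairs. The paper treats the minimizer equivalences as immediate consequences of these identities, while you additionally spell out the rotation-invariance argument handling configurations that meet $N$; that is a harmless elaboration, not a different approach.
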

\begin{proof}
For any $1\leq i \leq n$, we set $y_i:=T(x_i)$, hence we get, by \eqref{distanceT},
\begin{align*}
\textnormal{E}_{\log}(y_1,....,y_n)&:=-\sum_{i\neq j}^n\log \|y_i-y_j\|\\
&=-\sum_{i\neq j}^n\log\|T(x_i)-T(x_j)\| \\
&=-\sum_{i\neq j}^n\log\left(  \frac{|x_i-x_j|}{\sqrt{1+|x_i|^2}\sqrt{1+|x_j|^2}} \right) \\
&=-\sum_{i\neq j}^n\log|x_i-x_j|+(n-1)\sum_{i=1}^n\log(1+|x_i|^2)\\
&=\overline{w}_n(x_1,...,x_n).
\end{align*}
Furthermore, by \eqref{interinf}, we obtain
\begin{align*}
w_n(x_1,...,x_n)&=\overline{w}_n(x_1,...,x_n)+\sum_{i=1}^n\log(1+|x_i|^2)\\
&=-\sum_{i\neq j}\log\|y_i-y_j\|-2\sum_{i=1}^n\log\|y_i-N\|=\textnormal{E}_{\log}(y_1,....,y_n,N).
\end{align*}
\end{proof}
\begin{lemma} \label{weakmeas}
If $(x_1,...,x_n)$ minimizes $w_n$ or $\overline{w}_n$, then, for $\displaystyle \nu_n:=\frac1n\sum_{i=1}^n\delta_{x_i}$, we have
 $$
\nu_n\to \mu_V, \quad \text{ as } n\to +\infty,
 $$
 in the weak sense of measures.
\end{lemma}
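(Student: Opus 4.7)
The plan is to reduce both cases to the classical statement that the empirical measures of minimizers of $E_{\log}$ on the sphere converge weakly to the uniform measure, and then to pull the convergence back to $\R^2$ via the inverse stereographic projection. The crucial ingredient that allows this pull-back to work is that the limit measure on $\mathcal S$ places no mass on the north pole $N$.

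Recall that for $V(x)=\log(1+|x|^2)$ we have $T\sharp\mu_V=\sigma$, the normalized uniform surface measure on $\mathcal S$. Suppose first that $(x_1,\dots,x_n)$ minimizes $\overline w_n$. By Lemma~\ref{wminimizer}, the tuple $(T(x_1),\dots,T(x_n))$ minimizes $E_{\log}$ on $\mathcal S^n$, and the classical potential-theoretic result of Saff--Kuijlaars \cite{PointsSphere} gives that
\[
\frac{1}{n}\sum_{i=1}^n \delta_{T(x_i)} \longrightarrow \sigma \quad \text{weakly on } \mathcal S.
\]
If instead $(x_1,\dots,x_n)$ minimizes $w_n$, the same lemma gives that $(T(x_1),\dots,T(x_n),N)$ minimizes $E_{\log}$ on $\mathcal S^{n+1}$, so that $\tfrac{1}{n+1}\sum_{i=1}^n\delta_{T(x_i)}+\tfrac{1}{n+1}\delta_N\to\sigma$ weakly on $\mathcal S$; since the weight carried by $\delta_N$ vanishes as $n\to+\infty$, the same conclusion holds for $(1/n)\sum\delta_{T(x_i)}$. (Alternatively, in the $w_n$ case one could invoke the splitting formula \eqref{splitting} together with the asymptotic expansion \eqref{asymptotic} to show that $w_n/n^2\to I_V(\mu_V)$ and then apply the Hardy--Kuijlaars compactification framework \cite{Hardy} directly, as was already done in the lower bound proof of Theorem~\ref{THM}.)

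To pull the convergence back to $\R^2$, take any $f\in C_c(\R^2)$. Then $f\circ T^{-1}$ extends continuously to $\mathcal S$ by setting its value to $0$ at $N$, so that
\[
\int_{\R^2} f\,\frac{d\nu_n}{n} = \int_{\mathcal S}(f\circ T^{-1})\,d\Bigl(\frac{1}{n}\sum_{i=1}^n\delta_{T(x_i)}\Bigr) \longrightarrow \int_{\mathcal S}(f\circ T^{-1})\,d\sigma = \int_{\R^2} f\,d\mu_V,
\]
where the last equality uses $T\sharp\mu_V=\sigma$. Weak convergence of $\nu_n/n$ to $\mu_V$ then follows once we also establish tightness, and this is where $\sigma(\{N\})=0$ enters decisively: given $\varepsilon>0$, choose $R>0$ so that $\sigma\bigl(T(\R^2\setminus B_R)\cup\{N\}\bigr)<\varepsilon$ (possible since $\sigma$ has no atom at $N$), and apply the portmanteau theorem to the closed set $T(\R^2\setminus B_R)\cup\{N\}\subset\mathcal S$ to get $\limsup_n(\nu_n/n)(\R^2\setminus B_R)<\varepsilon$.

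The main technical point lies in this last tightness argument: weak convergence on the compactified sphere does not automatically transfer to weak convergence on the non-compact plane after removing $N$, and the argument only closes because the limit measure $\sigma$ has no mass at the puncture $N$, which is automatic here since $\sigma$ is absolutely continuous with respect to surface measure.
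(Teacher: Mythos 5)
Your proof is correct and follows essentially the same route as the paper: transfer to the sphere via Lemma \ref{wminimizer}, invoke the classical fact that empirical measures of minimizers of $\textnormal{E}_{\log}$ converge to the uniform measure $T\sharp\mu_V$ (the paper cites Brauchart--Dragnev--Saff where you cite Saff--Kuijlaars), treat the $w_n$ case by absorbing the vanishing weight of the extra atom at $N$, and pull the convergence back to $\R^2$ using that the limit carries no mass at $N$. The only difference is that you perform this last pull-back by an explicit test-function and tightness/portmanteau argument, whereas the paper simply invokes the homeomorphism of Lemma \ref{THomeo}; this is an unpacking of the same idea rather than a different method.
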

\begin{proof}
Let $(x_1,...,x_n)$ be a minimizer of $\bar{w}_n$, then $(T(x_1),...,T(x_n))$ is a minimizer of $\textnormal{E}_{\log}$.
Brauchart, Dragnev and Saff proved in \cite[Proposition 11]{EquilSphere} that
$$
\frac{1}{n}\sum_{i=1}^n\delta_{T(x_i)}\to T\#\mu_V.
$$
As $T\#\mu_V(\{N\})=0$, by Lemma \ref{THomeo} we get the result.\\
If $(x_1,...,x_n)$ is a minimizer of $w_n$, then $(T(x_1),...,T(x_n), N)$ minimizes $\textnormal{E}_{\log}$ and we can use our previous argument because
$$
\frac{1}{n+1}\left(\sum_{i=1}^n \delta_{T(x_i)}+\delta_N\right)=\frac{1}{n}\sum_{i=1}^n\delta_{T(x_i)}\left( \frac{n}{n+1} \right)+\frac{\delta_N}{n+1}\to T\# \mu_V,
$$
in the weak sense of measures, and we have the same conclusion.
\end{proof}

\begin{lemma}\label{CVlog} If $(x_1,...,x_n)$ is a minimizer of $w_n$   and if $\displaystyle \nu_n:=\frac1n\sum_{i=1}^n\delta_{x_i}$ then
$$
\lim_{n\to +\infty}\int_{\R^2}\log(1+|x|^2)d\nu_n(x)=\int_{\R^2}\log(1+|x|^2)d\mu_V(x).
$$
There exists minimizers of $\overline{w}_n$ for which the same is true.
\end{lemma}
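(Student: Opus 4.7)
The plan is to upgrade the weak convergence $\nu_n/n \to \mu_V$ of Lemma~\ref{weakmeas} into convergence against the unbounded test function $V(x):=\log(1+|x|^2)$. A standard truncation device handles the main step: I would set $V_R:=V\wedge \log(1+R^2)\in C_b(\R^2)$ so that weak convergence gives $\int V_R\,d(\nu_n/n)\to \int V_R\,d\mu_V$ for each fixed $R$, while $\int V_R\,d\mu_V\to \int V\,d\mu_V$ as $R\to\infty$ by monotone convergence (since $V\in L^1(\mu_V)$, which follows from the explicit form $d\mu_V=\frac{dx}{\pi(1+|x|^2)^2}$, or more generally from \eqref{intlogfinite}). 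The whole problem thus reduces to proving the uniform tail estimate
$$\lim_{R\to\infty}\limsup_{n\to\infty}\frac{1}{n}\sum_{|x_i|>R}V(x_i)=0.$$

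The key input for the tail estimate is the classical separation bound for logarithmic Fekete configurations on the sphere: every minimizer $(z_1,\ldots,z_N)$ of $\mathrm{E}_{\log}$ on a sphere of fixed radius satisfies $\|z_i-z_j\|\geq c/\sqrt{N}$ for some absolute $c>0$ (see, e.g., \cite{PointsSphere}). Combining this with Lemma~\ref{wminimizer}, which identifies a $w_n$-minimizer with a Fekete-type configuration $(T(x_1),\ldots,T(x_n),N)$ of $n+1$ points on $\mathcal{S}$ (one being $N$), and using the identity $\|T(x)-N\|=(1+|x|^2)^{-1/2}$, I would derive the uniform bound $|x_i|\leq C\sqrt{n}$. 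A companion area argument---the disjoint spherical caps of radius $c/(2\sqrt{n+1})$ centered at the $T(x_i)$'s each have area $\gtrsim 1/n$, so only $\lesssim n/R^2$ of them can fit inside the spherical cap of radius $\sim 1/R$ around $N$---yields the counting bound $\#\{i:|x_i|>R\}\leq Cn/R^2$, uniform in $n$ and $R$.

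A dyadic decomposition of $\{|x|>R\}$ into annuli $A_k:=\{2^kR<|x|\leq 2^{k+1}R\}$ (only finitely many being nonempty, by $|x_i|\leq C\sqrt n$) then finishes the estimate. Each $A_k$ contains at most $Cn/(2^kR)^2$ of the $x_i$'s, on which $V(x_i)\leq 2\log(2^{k+1}R)+O(1)$. Summing the resulting geometric series yields
$$\frac{1}{n}\sum_{|x_i|>R}V(x_i)\leq \frac{C(1+\log R)}{R^2},$$
which is independent of $n$ and vanishes as $R\to\infty$. This concludes the argument for $w_n$-minimizers.

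The final assertion, for $\bar w_n$, requires additional care, since a generic $\bar w_n$-minimizer need not satisfy $|x_i|\leq C\sqrt n$: some $T(x_i)$ could lie arbitrarily close to $N$. To exhibit a minimizer for which the conclusion holds, I would start from an arbitrary minimizer $(y_1,\ldots,y_n)$ of $\mathrm{E}_{\log}$ on $\mathcal{S}^n$ and use the separation bound together with a simple area argument: the disjoint caps of radius $c/(2\sqrt n)$ around the $y_i$'s occupy only a bounded fraction of $\mathcal{S}$, so some $z\in\mathcal{S}$ lies at distance $\geq c'/\sqrt n$ from every $y_i$; rotating the configuration to map $z$ to $N$ preserves minimality (by the rotational invariance of $\mathrm{E}_{\log}$), and pulling back by $T^{-1}$ produces a minimizer of $\bar w_n$ with $|x_i|\leq C\sqrt n$, to which the preceding argument applies verbatim. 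The main obstacle throughout is invoking the sphere separation bound with the sharp $1/\sqrt n$ rate (anything weaker such as $1/\sqrt{n\log n}$ would leave a $\log n$ factor in the tail estimate and break uniformity in $n$); once this classical input is in place, everything else is routine tail analysis and dyadic summation.
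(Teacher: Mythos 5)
Your treatment of $w_n$-minimizers is essentially the paper's own argument: both rest on the $c/\sqrt{n}$ separation of minimizers of the logarithmic energy on the sphere (the paper invokes \cite[Theorem 15]{EquilSphere}), applied to the $(n+1)$-point minimizing configuration $(T(x_1),\dots,T(x_n),N)$ from Lemma~\ref{wminimizer}, to obtain a cap-counting bound $n(r)\leq Cnr^2$ for the number of points within chordal distance $r$ of $N$, hence a tail estimate of order $\log R/R^2$ for $\frac1n\sum_{|x_i|>R}\log(1+|x_i|^2)$; the paper organizes this as an Abel summation in $r$ and splits the integral over $B_R$ and $B_R^c$, while you use a dyadic sum and a truncation $V\wedge\log(1+R^2)$, but these are the same computation, and both conclude via the weak convergence of Lemma~\ref{weakmeas}. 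Where you genuinely diverge is the $\overline{w}_n$ assertion: the paper averages $-\frac2n\sum_i\log\|Ry_i-N\|$ over rotations $R$ and selects a rotation realizing the spherical mean, which yields for every $n$ a minimizer satisfying the \emph{exact} identity $\frac1n\sum_i\log(1+|\bar x_i|^2)=\int\log(1+|x|^2)\,d\mu_V$; you instead rotate so that $N$ lies at distance $\geq c'/\sqrt n$ from all points (legitimate, by the same separation-plus-area argument) and rerun the tail estimate, getting the limit only. Both routes are correct and suffice for the way the lemma is used in Theorem~\ref{RSZ}; the paper's averaging trick is shorter and exact, while yours is uniform with the $w_n$ case and makes explicit the one genuine danger, namely that without separation from $N$ a single point very near the pole would ruin the estimate. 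Two small points of care in your version: in the cap-counting step the disjoint caps of radius $c/(2\sqrt n)$ sit inside a cap of radius $r+c/(2\sqrt n)$, so the bound $Cnr^2$ is obtained for $r\gtrsim 1/\sqrt n$ (below that scale the count is $0$ or $O(1)$ thanks to the separation from $N$), and the truncation step should use that $\mu_V$ has a density (or monotone convergence, as you do) so that the $R$-limits commute correctly; neither affects the validity of the argument.
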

\begin{proof} Let $(x_1,...,x_n)$ be a minimizer of $\overline{w}_n$. We define $y_i:=T(x_i)$ for any $1\leq i\leq n$ and we notice that
$$\int_{\R^2}\log(1+|x|^2)d\nu_n(x)=-2\int_{\R^2}\log\left(\frac{1}{\sqrt{1+|x|^2}}  \right)d\nu_n(x)=-2\int_{\mathcal{S}}\log\|y-N\|dT\#\nu_n(y),
$$
and by Lemma~\ref{wminimizer}, $(y_1,...,y_n)$ is a minimizer of $\textnormal{E}_{\log}$ on $\mathcal{S}$. 

Now, denoting by $\sigma$ the normalized Haar measure on $\text{SO(3)}$, for any point $y_0$ on the sphere we have that the image of $\sigma$ by the map $R\to R(y_0)$ is the normalized uniform measure on the sphere. Therefore 
$$-\int_{\text{SO(3)}}\left(\frac{2}{n}\sum_i \log\|Ry_i-N\|\right)\,d\sigma(R) = -2\dashint_{\ms}\log\|y-N\|\,dy,$$
where $\dashint$ denotes the average with respect to the uniform measure on $\ms$. It follows that for some $R_1$ the integrand of left-hand side is no greater than the right-hand side and that for some (possibly identical) $R_2$ the reverse is true. Then since $\text{SO(3)}$ is connected we may connect $R_1$ and $R_2$ by a continuous path, and we may further assume that $R y_i \neq N$ for every $i$ when $R$ is along this path. Since  the integrand of the left-hand side  is continuous with respect to $R$ on the path we deduce that there exists a rotated configuration $(\bar y_1,...,\bar y_n)$ such that 
$$\frac{1}{n}\sum_i \log\|\bar y_i-N\| = \dashint_{\ms}\log\|y-N\|\,dy.$$
But, for any rotation $R$ of $\ms$ the rotated configuration of points is still a minimizer. Thus, transporting   back to $\R^2$ with $T^{-1}$, we obtain a minimizer $(\bar x_1,...,\bar x_n)$ of $\overline{w}_n$ such that 
$$\frac{1}{n}\sum_i\log(1+|\bar x_i|^2) =\int_{\R^2}\log(1+|x|^2)d\mu_V(x).$$

If $(x_1,\dots,x_n)$ is a minimizer of $w_n$ we use \cite[Theorem 15]{EquilSphere} about the optimal point separation which yields the existence of constants $C$ and $n_0$ such that for any $n\geq n_0$ and any minimizer $\{y_1,...,y_n\}\in\mathcal{S}^n$ of the logarithmic energy on the sphere, we have 
$$
\min_{i\neq j}\|y_i-y_j\|>\frac{C}{\sqrt{n-1}}.
$$
Letting $y_i = T(x_i)$ we have that $(N,y_1,\dots, y_n)$ is a minimizer of the logarithmic energy, hence for any $1\leq i\leq n$,
\begin{equation*} 
\|y_i-N\|>\frac{C}{\sqrt{n-1}}.
\end{equation*}

\noindent For $n\geq n_0$ and $\delta>0$ sufficiently small, we define, for any $0< r\leq \delta$, 
$$
k(r):=\string #\left\{y_i \mid y_i\in B(N,r)\cap\mathcal{S}  \right\},
$$
and $r_i=\|y_i-N\|$. From the separation property  there exists a constant $C$ such that $k(r)\leq Cr^2n$ for any $r$. Hence we have $k(r) = 0$ if $r<1/\sqrt{Cn}$. Thus, using integration by parts, for some small enough $c>0$ we have
\begin{align*}
-\sum_{y_i\in B(N,\delta)}\log r_i &=-\int_{c/\sqrt{n-1}}^\delta \log r\,dk(r)\\
&=-n(\delta)\log \delta +\int_{c/\sqrt{n-1}}^\delta \frac{k(r)}{r}dr\\
&\leq -Cn \delta^2\log\delta +Cn\int_{c/\sqrt{n-1}}^\delta rdr\leq C\delta^2 n|\log\delta|.
\end{align*}
It follows that
\begin{align}\label{convlog}
\lim_{\delta\to 0}\limsup_{n\to +\infty}-\frac{1}{n}\int_{B(N,\delta)\cap\mathcal{S}}\log\|y-N\|dT\#\nu_n(y)&=   \lim_{\delta\to 0}\limsup_{n\to +\infty}-\frac{1}{n}\sum_{y_i\in B(N,\delta)}\log\|y_i-N\|=0.
\end{align}
For every integer $n$ and $R>0$ we have 
\begin{equation}\label{aaa}  \int_{\R^2}\log(1+|x|^2)d\nu_n(x) = \int_{B_R}\log(1+|x|^2)d\nu_n(x)+\int_{B_R^c}\log(1+|x|^2)d\nu_n(x).\end{equation}
By Lemma \ref{weakmeas}, $\nu_n$ goes weakly to the measure $\mu_V$ on $B_R$ for any $R$  hence
$$\lim_{R\to +\infty}\lim_{n\to +\infty}\int_{B_R}\log(1+|x|^2)d\nu_n(x) =\lim_{R\to +\infty}\int_{B_R}\log(1+|x|^2)d\mu_V(x) = \int_{\R^2}\log(1+|x|^2)d\mu_V(x),$$
and from \eqref{convlog} we have     
$$\lim_{R\to +\infty}\limsup_{n\to +\infty}\frac{1}{n}\int_{B_R^c}\log(1+|x|^2)d\nu_n(x) = 0.$$
Therefore, taking the limits $n\to +\infty$ and then $R\to +\infty$ in \eqref{aaa} we find 
$$lim_{n\to +\infty}\int_{\R^2}\log(1+|x|^2)d\nu_n(x) =\int_{\R^2}\log(1+|x|^2)d\mu_V(x).$$
The convergence is proved.
\end{proof}

\noindent The following result proves the existence of the constant $C$ in the Conjecture \ref{conj1} of Rakhmanov, Saff and Zhou.
\begin{thm}\label{RSZ} We have
\begin{equation*}
\mathcal{E}_{\log}(n)=\left( \frac{1}{2}-\log 2 \right)n^2-\frac{n}{2}\log n + \left( \frac{1}{\pi}\min_{\mathcal{A}_1}W +\frac{\log\pi}{2}+\log2\right)n +o(n), \quad \text{as } n\to +\infty.
\end{equation*}
\end{thm}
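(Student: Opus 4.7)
The strategy is to apply Theorem~\ref{THM} to the admissible potential $V(x) = \log(1+|x|^2)$, whose equilibrium measure is $d\mu_V(x) = \pi^{-1}(1+|x|^2)^{-2}\,dx$ supported on $\Sigma_V = \R^2$ (the admissibility hypotheses are immediate from the explicit formula for $m_V$), then to translate the resulting asymptotic for $\min w_n$ into one for $\mathcal{E}_{\log}(n)$ using Lemmas~\ref{wminimizer} and~\ref{CVlog}, and finally to evaluate $I_V(\mu_V)$ and $\alpha_V$ in closed form.

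For the leading constant, the stereographic identity $\|T(x)-T(y)\| = |x-y|/\sqrt{(1+|x|^2)(1+|y|^2)}$ rewrites the integrand as $\tfrac{V(x)+V(y)}{2} - \log|x-y| = -\log\|T(x)-T(y)\|$, so pushing forward by $T$ yields
$$I_V(\mu_V) = \iint_{\ms\times\ms} -\log\|z-z'\|\,d\sigma(z)\,d\sigma(z'),$$
where $\sigma$ is the uniform probability measure on the sphere $\ms$ of radius $1/2$ (as recalled after Theorem~\ref{equi}). Reducing to the classical unit-sphere logarithmic energy $\log 2 - \tfrac12$ and applying the scaling correction $-\log(\tfrac12) = \log 2$ produces $I_V(\mu_V) = 1/2$. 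A direct polar-coordinate computation gives $\int_{\R^2} m_V\log m_V\,dx = -\log\pi - 2$, whence by~\eqref{alpha},
$$\alpha_V = \frac{1}{\pi}\min_{\mathcal{A}_1}W + \frac{\log\pi}{2} + 1.$$
Theorem~\ref{THM} then reads $\min w_n = \tfrac12 n^2 - \tfrac{n}{2}\log n + \alpha_V\,n + o(n)$.

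To pass from $\min w_n$ to $\mathcal{E}_{\log}(n)$, I would use the identity $w_n - \overline{w}_n = \sum_i \log(1+|x_i|^2)$ together with Lemma~\ref{CVlog}: evaluating at a minimizer of $w_n$ gives $\min \overline{w}_n \le \min w_n - n + o(n)$, while evaluating at the particular minimizer of $\overline{w}_n$ whose existence with the prescribed logarithmic moment the same lemma asserts gives the reverse bound. Both estimates rely on the polar computation $\int \log(1+|x|^2)\,d\mu_V = 1$. Hence $\min w_n = \min \overline{w}_n + n + o(n)$. Lemma~\ref{wminimizer} identifies $\min \overline{w}_n$ with the minimum of $\textnormal{E}_{\log}$ over $n$-tuples in $\ms$; since $\ms$ has radius $1/2$ while $\S^2$ has radius $1$, the similarity $z \mapsto 2(z-(0,0,\tfrac12))$ multiplies chord distances by $2$, giving $\min_{\ms^n}\textnormal{E}_{\log} = \mathcal{E}_{\log}(n) + n(n-1)\log 2$. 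Assembling,
$$\mathcal{E}_{\log}(n) = \min w_n - n - n(n-1)\log 2 + o(n),$$
and substituting the expansion of $\min w_n$ collapses the $n^2$- and $n\log n$-coefficients to $\tfrac12 - \log 2$ and $-\tfrac12$ respectively, while the $n$-coefficient becomes $\alpha_V - 1 + \log 2 = \tfrac{1}{\pi}\min_{\mathcal{A}_1}W + \tfrac{\log\pi}{2} + \log 2 = C$.

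The main obstacle is the $O(n)$ comparison $\min w_n = \min \overline{w}_n + n + o(n)$: weak convergence of empirical measures is not sufficient; one needs $\tfrac{1}{n}\sum_i \log(1+|x_i|^2) \to 1$ on near-minimizers, which is precisely the content of Lemma~\ref{CVlog} and in turn rests on the Brauchart--Dragnev--Saff point-separation estimate~\eqref{separation}. Everything else is straightforward bookkeeping: the admissibility check for $V$, two elementary polar integrals, and the similarity between $\ms$ and $\S^2$.
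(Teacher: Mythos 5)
Your proposal is correct and follows essentially the same route as the paper: apply Theorem~\ref{THM} to $V(x)=\log(1+|x|^2)$, use Lemmas~\ref{wminimizer} and~\ref{CVlog} (both the separation-based part for minimizers of $w_n$ and the rotation-averaged minimizer of $\overline{w}_n$) to get $\min \overline{w}_n=\min w_n-n+o(n)$, and account for the radius-$\tfrac12$ sphere via the $n(n-1)\log 2$ correction, with the same evaluations $I_V(\mu_V)=\tfrac12$, $\int\log(1+|x|^2)\,d\mu_V=1$ and $\alpha_V=\tfrac1\pi\min_{\mathcal A_1}W+\tfrac{\log\pi}{2}+1$. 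The only difference is cosmetic: you compare the two minima directly, whereas the paper phrases the same two inequalities as matching liminf/limsup bounds on the normalized energy of a minimizer of $\overline{w}_n$.
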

\begin{proof}
As $\textnormal{E}_{\log}$ is invariant by translation of the $2$-sphere, we work on the sphere $\tilde{\S}^2$ of radius 1 and centered at $(0,0,1)$. Let $(y_1,...,y_n)\in\tilde{\S}^2$ be a minimizer of $\textnormal{E}_{\log}$. Without loss of generality, for any $n$, we can choose this configuration such that $y_i\neq N$ for any $1\leq i\leq n$. Hence there exists $(x_1,...,x_n)$ such that $\displaystyle \frac{y_i}{2}=T(x_i)$ for any $i$ and we get

\begin{align*} 
\textnormal{E}_{\log}(y_1,....,y_n)&=-\sum_{i\neq j}^n\log \|y_i-y_j\|\\
&=-\sum_{i\neq j}^n\log\|T(x_i)-T(x_j)\|-n(n-1)\log 2 \\
&=\overline{w}_n(x_1,...,x_n)-n(n-1)\log 2.
\end{align*}
By Lemma \ref{wminimizer}, $(y_1,...,y_n)$ is a minimizer of $\textnormal{E}_{\log}$ if and only if $(x_1,...,x_n)$ is a minimizer of $\overline{w}_n$. By the lower bound \eqref{LB} and the convergence of Lemma \ref{CVlog}, we have, for some  minimizer $(\bar{x}_1,...,\bar{x}_n)$ of $\overline{w}_n$:
\begin{align*}
&\liminf_{n\to +\infty}\frac{1}{n}\left[ \overline{w}_n(\bar{x}_1,...,\bar{x}_n)-n^2 I_V(\mu_V)+\frac{n}{2}\log n  \right]\\
&=\liminf_{n\to +\infty}\frac{1}{n}\left[ w_n(\bar{x}_1,...,\bar{x}_n)-\sum_{i=1}^n\log(1+|\bar{x}_i|^2)-n^2 I_V(\mu_V)+\frac{n}{2}\log n  \right]\\
&\geq \alpha_V -\int_{\R^2}\log(1+|x|^2)d\mu_V(x).
\end{align*}
The upper bound \eqref{UB} and Lemma \ref{convlog} yield, $(x_1,...,x_n)$ being a minimizer of $w_n$:
\begin{align*}
&\limsup_{n\to +\infty} \frac{1}{n}\left[ \overline{w}_n(\bar{x}_1,...,\bar{x}_n)-n^2 I_V(\mu_V)+\frac{n}{2}\log n  \right]\\
&\leq \limsup_{n\to +\infty} \frac{1}{n}\left[ \overline{w}_n(x_1,...,x_n)-n^2 I_V(\mu_V)+\frac{n}{2}\log n  \right]\\
&= \limsup_{n\to +\infty}\frac{1}{n}\left[ w_n(x_1,...,x_n)-\sum_{i=1}^n\log(1+|x_i|^2)-n^2 I_V(\mu_V)+\frac{n}{2}\log n  \right]\\
&=\alpha_V -\int_{\R^2}\log(1+|x|^2)d\mu_V(x).
\end{align*}
Thus, we get
$$
\lim_{n\to +\infty}\frac{1}{n}\left[ \overline{w}_n(\bar{x}_1,...,\bar{x}_n)-n^2 I_V(\mu_V)+\frac{n}{2}\log n  \right]=\alpha_V -\int_{\R^2}\log(1+|x|^2)d\mu_V(x).
$$
Therefore, we have the following asymptotic expansion, as $n\to +\infty$, for some minimizer $(\bar{x}_1,...,\bar{x}_n)$ of $\overline{w}_n$:
\begin{align*}
&\overline{w}_n(\bar{x}_1,...,\bar{x}_n)\\
&=n^2I_V(\mu_V)-\frac{n}{2}\log n+\left(\frac{1}{\pi}\min_{\mathcal{A}_1}W-\frac{1}{2}\int_{\R^2}m_V(x)\log m_V(x)dx-\int_{\R^2}V(x)d\mu_V(x)\right)n + o(n). 
\end{align*}
We know that $\displaystyle I_V(\mu_V)=\frac{1}{2}$ (see \cite[Eq. (2.26)]{BrauchartSphere}) and 
\begin{align*}
\int_{\R^2}\log(1+|x|^2)d\mu_V(x)&=\frac{1}{\pi}\int_{\R^2}\frac{\log(1+|x|^2)}{(1+|x|^2)^2}\,dx\\
&=2\int_0^{+\infty}\frac{r\log(1+r^2)}{(1+r^2)^2}dr\\
&=-\left[\frac{\log(1+r^2)}{1+r^2}  \right]_0^{+\infty}+\int_0^{+\infty}\frac{2r}{(1+r^2)^2}dr\\
&=-\left[ \frac{1}{1+r^2} \right]_0^{+\infty}\\
&=1.
\end{align*}
Hence we obtain, as $n\to +\infty$, 
\begin{align*}
\overline{w}_n(\bar{x}_1,...,\bar{x}_n)&=\frac{n^2}{2}-\frac{n}{2}\log n + \left(   \frac{1}{\pi}\min_{\mathcal{A}_1}W +\frac{1}{2}\int \log(\pi(1+|x|^2)^2)d\mu_V(x)-1\right)n + o(n)\\
&=\frac{n^2}{2}-\frac{n}{2}\log n +\left(   \frac{1}{\pi}\min_{\mathcal{A}_1}W +\frac{\log\pi}{2}+\int \log(1+|x|^2)d\mu_V(x) -1\right)n +o(n) \\
&= \frac{n^2}{2}-\frac{n}{2}\log n +\left(   \frac{1}{\pi}\min_{\mathcal{A}_1}W +\frac{\log\pi}{2}\right)n +o(n),
\end{align*}
and the asymptotic expansion of $\mathcal{E}_{\log}(n)$ is
$$
\mathcal{E}_{\log}(n)=\left( \frac{1}{2}-\log 2 \right)n^2-\frac{n}{2}\log n + \left( \frac{1}{\pi}\min_{\mathcal{A}_1}W +\frac{\log\pi}{2}+\log 2\right)n +o(n).
$$
\end{proof}
\begin{remark}
It follows from the lower bound proved by Rakhmanov, Saff and Zhou \cite[Theorem 3.1]{2400418}, that
\begin{align*}
 \frac{1}{\pi}\min_{\mathcal{A}_1}W +\frac{\log\pi}{2}+\log 2&=\lim_{n\to +\infty}\frac{1}{n}\left[\textnormal{E}_{\log}(y_1,...,y_n)-\left(\frac{1}{2}-\log2\right)n^2+\frac{n}{2}\log n  \right]\\
 &\geq -\frac{1}{2}\log\left[\frac{\pi}{2}(1-e^{-a})^b  \right],
\end{align*}
where $\displaystyle a:=\frac{2\sqrt{2\pi}}{\sqrt{27}}\left(\sqrt{2\pi+\sqrt{27}} +\sqrt{2\pi} \right)$ and $\displaystyle b:=\frac{\sqrt{2\pi+\sqrt{27}}-\sqrt{2\pi}}{\sqrt{2\pi+\sqrt{27}}+\sqrt{2\pi}}$, and we get
$$
\min_{\mathcal{A}_1}W\geq -\frac{\pi}{2}\log\left[ 2\pi^2(1-e^{-a})^b \right]\approx - 4.6842707.
$$
\end{remark}

\subsection{Computation of renormalized energy for the triangular lattice and upper bound for the term of order $n$}\label{NTh}
Sandier and Serfaty proved in \cite[Lemma 3.3]{Sandier_Serfaty} that
\begin{equation*}
W(\Lambda_{1/2\pi})=-\frac{1}{2}\log\left(\sqrt{2\pi b}|\eta(\tau)|^2\right),
\end{equation*}
where $\Lambda_{1/2\pi}$ is the triangular lattice corresponding to the density $m=1/2\pi$, $\displaystyle \tau=a+ib=1/2+i\frac{\sqrt{3}}{2}$ and $\eta$ is the Dedekind eta function defined, with $q=e^{2i\pi\tau}$, by
\begin{equation*}
\eta(\tau)=q^{1/24}\prod_{n\geq 1}(1-q^n).
\end{equation*}
We recall the Chowla-Selberg formula (see \cite{SelbergChowla} or \cite[Proposition 10.5.11]{Cohen} for details):
\begin{equation*}
4\pi\sqrt{-D}b|\eta(\tau)|^4 = \prod_{m=1}^{|D|}\Gamma\left(\frac{m}{|D|}  \right)^{\frac{w}{2}\left( \frac{D}{m} \right)},
\end{equation*}
for $\tau$ a root of the integral quadratic equation $\alpha z^2+\beta z+\gamma=0$ where $D=\beta^2-4\alpha\gamma<0$, $\displaystyle \left(\frac{D}{m}  \right)$ is the Kronecker symbol, $w$ the number of roots of unity in $\Q(i\sqrt{-D})$ and when the class number of $\Q(i\sqrt{-D})$ is equal to 1. In our case $b=\sqrt{3}/2$, $w=6$, $\alpha=\beta=\gamma=1$ because $\tau$ is a root of unity, hence $D=-3$, $\displaystyle\left(  \frac{-3}{1}\right)=1$ and $\displaystyle \left(  \frac{-3}{2}\right)=-1$ by the Gauss Lemma. 
%Therefore we obtain
%\begin{equation*}
%\Gamma(1/3)^3\Gamma(2/3)^{-3}=4\pi\sqrt{3}\frac{\sqrt{3}}{2}|\eta(\tau)|^4,
%\end{equation*}
%and by Euler's reflection formula $\displaystyle \Gamma(1-1/3)\Gamma(1/3)=\frac{\pi}{\sin(\pi/3)}$, we get
%\begin{equation*}
%\frac{\Gamma(1/3)^6 3\sqrt{3}}{8\pi^3}=\frac{4\pi\sqrt{3}\times\sqrt{3}|\eta(\tau)|^4}{2}.
%\end{equation*}
Finally we obtain
\begin{equation*}|\eta(\tau)|^4=\frac{\Gamma(1/3)^6\sqrt{3}}{16\pi^4}.
\end{equation*}
Now it is possible to find the exact value of the renormalized energy of the triangular lattice $\Lambda_1$ of density $m=1$:
\begin{align*}
W(\Lambda_1)&=2\pi W(\Lambda_{1/2\pi})-\pi\frac{\log(2\pi)}{2}\\
&=-\pi \log\left(\sqrt{2\pi b}|\eta(\tau)|^2\right)-\pi\frac{\log(2\pi)}{2}\\
&= \pi\log\pi -\frac{\pi}{2}\log3-3\pi\log(\Gamma(1/3))+\frac{3}{2}\pi\log2\\
&=\pi\log\left( \frac{2\sqrt{2}\pi}{\sqrt{3}\Gamma(1/3)^3} \right)\\
&\approx  -4.1504128.
\end{align*}
Thus, we get
\begin{align*}
&\frac{1}{\pi}W(\Lambda_1)+\frac{\log\pi}{2}+\log 2\\
&=\frac{1}{\pi}\left( \pi\log\pi -\frac{\pi}{2}\log3-3\pi\log(\Gamma(1/3))+\frac{3}{2}\pi\log2  \right)+\frac{\log\pi}{2}+\log 2\\
&=2\log2 +\frac{1}{2}\log\frac{2}{3}+3\log\frac{\sqrt{\pi}}{\Gamma(1/3)}=C_{BHS}\approx  -0.0556053,
\end{align*}
and we find exactly the value $C_{BHS}$ conjectured by Brauchart, Hardin and Saff in \cite[Conjecture 4]{Brauchart}. Therefore Conjecture \ref{conj2} is true if and only if the triangular lattice $\Lambda_1$ is a global minimizer of $W$ among vector-fields in $\mathcal{A}_1$, i.e.
$$
\min_{\mathcal{A}_1}W=W(\Lambda_1)=\pi\log\left( \frac{2\sqrt{2}\pi}{\sqrt{3}\Gamma(1/3)^3} \right).
$$
Thus we obtain the following result
\begin{thm}\label{equiconj} We have:
\begin{enumerate}
\item  It holds 
\begin{equation*}
\lim_{n\to +\infty}\frac{1}{n}\left[\mathcal{E}_{\log}(n)-\left(\frac{1}{2}-\log2\right)n^2+\frac{n}{2}\log n  \right]\leq 2\log2 +\frac{1}{2}\log\frac{2}{3}+3\log\frac{\sqrt{\pi}}{\Gamma(1/3)}.
\end{equation*}
\item Conjectures \ref{conj2} and \ref{conj3} are equivalent, i.e. $\displaystyle \min_{\mathcal{A}_1} W= W(\Lambda_1) \iff C=C_{BHS}$.
\end{enumerate}
\end{thm}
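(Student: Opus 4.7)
The plan is to combine the asymptotic expansion of Theorem~\ref{RSZ} with the explicit evaluation of $W(\Lambda_1)$ via the Chowla-Selberg formula carried out earlier in this section. By Theorem~\ref{RSZ}, the constant $C$ from Conjecture 1 equals $\frac{1}{\pi}\min_{\mathcal{A}_1}W + \frac{\log\pi}{2}+\log 2$, so both items reduce to comparing $\min_{\mathcal{A}_1}W$ with $W(\Lambda_1)$.

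For part 1, I would note that $\Lambda_1$ induces a vector field in the admissible class $\mathcal{A}_1$, so $\min_{\mathcal{A}_1}W \leq W(\Lambda_1)$. Plugging the closed-form value $W(\Lambda_1) = \pi\log(2\sqrt{2}\pi/(\sqrt{3}\,\Gamma(1/3)^3))$ obtained in Section~\ref{NTh} into the formula for $C$ from Theorem~\ref{RSZ} produces exactly the quantity $C_{BHS}$ of \eqref{CBHSdef}, which yields the claimed upper bound on the order-$n$ term in the asymptotic expansion of $\mathcal{E}_{\log}(n)$.

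For part 2, the equivalence is essentially tautological once part (1) is in hand. Conjecture 2 asserts $C = C_{BHS}$; subtracting the common constants $\frac{\log\pi}{2}+\log 2$ from both sides of $C = \frac{1}{\pi}\min_{\mathcal{A}_1}W + \frac{\log\pi}{2}+\log 2$ and $C_{BHS} = \frac{1}{\pi}W(\Lambda_1) + \frac{\log\pi}{2}+\log 2$ reduces Conjecture 2 to $\min_{\mathcal{A}_1}W = W(\Lambda_1)$. Since $\Lambda_1 \in \mathcal{A}_1$ always, this equality is equivalent to the statement that $\Lambda_1$ realizes the global minimum of $W$ over $\mathcal{A}_1$, which is precisely Conjecture 3 (note that by Theorem~\ref{Trioptimal}, $\Lambda_1$ is already the minimizer among Bravais lattices).

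The genuinely hard inputs lie upstream and are already established: the $\Gamma$-convergence machinery adapted to the weak growth setting that underlies Theorem~\ref{RSZ}, and the arithmetic identity for $|\eta(\tau)|^2$ at $\tau = e^{i\pi/3}$ furnished by Chowla-Selberg together with Euler's reflection formula. Given these, no further obstacle arises: the present theorem is purely a repackaging of the identifications $C = \pi^{-1}\min_{\mathcal{A}_1}W + \tfrac{\log\pi}{2}+\log 2$ and $\pi^{-1}W(\Lambda_1) + \tfrac{\log\pi}{2}+\log 2 = C_{BHS}$.
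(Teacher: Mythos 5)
Your proposal is correct and follows essentially the same route as the paper: Theorem~\ref{RSZ} identifies $C=\frac{1}{\pi}\min_{\mathcal{A}_1}W+\frac{\log\pi}{2}+\log 2$, the Chowla--Selberg/Euler-reflection computation gives $\frac{1}{\pi}W(\Lambda_1)+\frac{\log\pi}{2}+\log 2=C_{BHS}$, and the trivial bound $\min_{\mathcal{A}_1}W\leq W(\Lambda_1)$ (with equality precisely when $\Lambda_1$ is a global minimizer, i.e.\ Conjecture 3) yields both items. No gaps.
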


\section*{Appendix} Here we prove the following  
\begin{prop} Assume $X$ is  a Polish space X,   on which $\R^n$ acts continuously. We denote this action $(\lambda,u)\to\theta_\lambda u$ and assume it is separately continuous w.r.t  both $\lambda\in\R^n$ and $u\in X$. Assume  $P$ is a probability measure on $\R^n\times X$ which for every $\lambda$ is invariant under the map $(x,u)\to (x,\theta_\lambda u)$. Then, for any continuous function $x\to\lambda(x)$ it holds that $P$ is invariant under the map $(x,u)\to (x,\theta_\lambda(x) u)$.
\end{prop}
\begin{proof}
Let $\Phi$ be any bounded continuous function on $\R^n\times X$, we need to prove that for any continuous function $x\to\lambda(x)$
\begin{equation*}
\int \Phi(x,u)\,dP(x,u) = \int \Phi(x,\theta_{\lambda(x)}u)\,dP(x,u).
\end{equation*}
for any integer $k>0$ we let $\{\chi_{i,k}\}_i$ be a partition of unity on $\R^n$ subordinate to the covering of $\R^n$ by balls of radius $1/k$, and we let $x_{i,k}$ belong to the support of $\chi_{i,k}$. Then, from the continuity of $\Phi$, $\lambda$ and $\theta$, it is straightforward to check that for every $(x,u)\in\R^n\times X$ we have 
$$\lim_{k\to +\infty} \sum_i \chi_{i,k}(x) \Phi(x,\theta_{\lambda(x_{i,k})}u) = \Phi(x,\theta_{\lambda(x)}u).$$ 
It follows by dominated convergence that 
\begin{equation}\label{jnh}\lim_{k\to +\infty} \sum_i\int \chi_{i,k}(x) \Phi(x,\theta_{\lambda(x_{i,k})}u) \,dP(x,u) = \int \Phi(x,\theta_{\lambda(x)}u)\,dP(x,u).\end{equation}
But by the invariance of $P$ we have 
$$ \int \chi_{i,k}(x) \Phi(x,\theta_{\lambda(x_{i,k})}u) \,dP(x,u) = \int \chi_{i,k}(x) \Phi(x,u) \,dP(x,u),$$
hence 
$$ \sum_i\int \chi_{i,k}(x) \Phi(x,\theta_{\lambda(x_{i,k})}u) \,dP(x,u) = \int \Phi(x,u) \,dP(x,u).$$
Replacing \eqref{jnh} we get the desired result.
\end{proof}

\noindent \textbf{Acknowledgements:} We are grateful to Adrien Hardy, Edward B. Saff and Sylvia Serfaty for their interest and helpful discussions. We are also grateful to the  anonymous referees for their suggestions, remarks and patience in reading the manuscript.

\bibliographystyle{plain}
\bibliography{allbiblio}

\noindent LAURENT BETERMIN \\
Institut f\"{u}r Angewandte Mathematik, \\
Interdisciplinary Center for Scientific Computing (IWR),\\
Universit\"{a}t Heidelberg,\\
Im Neuenheimer Feld 205, 69120 Heidelberg. Deutschland \\
\texttt{betermin@uni-heidelberg.de}\\ 

\noindent ETIENNE SANDIER\\
Universit\'e Paris-Est,\\
LAMA - CNRS UMR 8050,\\
61, Avenue du G\'en\'eral de Gaulle, 94010 Cr\'eteil. France\\
\& Institut Universitaire de France\\
\texttt{sandier@u-pec.fr}

\end{document}